\documentclass[preprint,12pt,3p]{elsarticle}
\usepackage{verbatim}
\usepackage[active,new,old]{correct} 
\usepackage{amsmath,amsthm,amsfonts,amssymb}
\usepackage{algorithm}
\usepackage{algpseudocode}
\usepackage{mathdots}
\usepackage{cases}
\usepackage[hidelinks]{hyperref}
\usepackage{graphics}
\usepackage{graphicx}
\usepackage{caption}
\usepackage{subcaption}
\usepackage{multicol}
\usepackage[normalem]{ulem}
\usepackage{xcolor,sectsty}
\definecolor{astral}{RGB}{46,116,181}
\subsectionfont{\color{astral}}
\sectionfont{\color{astral}}
\linespread{1.2}

\newtheorem{theorem}{Theorem}[section]

\newtheorem{remark}[theorem]{Remark}

\newtheorem{proposition}[theorem]{Proposition}
\definecolor{darkslategray}{rgb}{0.18, 0.31, 0.31}
\definecolor{warmblack}{rgb}{0.0, 0.26, 0.26}
\definecolor{thrdfc}{rgb}{0.36, 0.54, 0.66}
\definecolor{bole}{rgb}{0.55, 0.71, 0.0}

 \journal{...}

\newcommand{\mb}{\mathbb}

                             \begin{document}
                                   \begin{frontmatter}
\title{\textcolor{warmblack}{\bf Tight bounds of norms of Wasserstein metric matrix}}
\author{Aaisha Be$^{\dagger, a}$, Nachiketa Mishra{$^{\ddagger, b}$}, Debasisha Mishra{$^{\dagger, c}$}}
\address{$^{\dagger}$Department of Mathematics,\\
 National Institute of Technology Raipur,\\
 Raipur, Chhattisgarh, India\\
$^{\ddagger}$Department of Mathematics,\\
 Indian Institute of Information Technology, \\
 Design and Manufacturing, Kancheepuram,\\
 Chennai, Tamil Nadu, India\\
 \textit{E-mail$^a$}: \texttt{aaishasaeed7\symbol{'100}gmail.com}\\
\textit{E-mail$^b$}: \texttt{nmishra\symbol{'100}iiitdm.ac.in}\\
\textit{E-mail$^c$}: \texttt{dmishra\symbol{'100}nitrr.ac.in.}}
                          
                          \begin{abstract}
Very recently, Bai [Linear Algebra Appl., 681:150-186, 2024 \& Appl. Math. Lett., 166:109510, 2025] studied some concrete structures, and obtained essential algebraic and computational properties of the one-dimensional, two-dimensional and generalized Wasserstein-1 metric matrices. This article further studies some algebraic and computational properties of these classes of matrices. Specifically, it provides lower and upper bounds on the $1,2,\infty$-norms of these matrices, their inverses, and their condition numbers. For the $1$ and $\infty$-norms, these upper bounds are much sharper than the existing ones established in the above mentioned articles.
These results are also illustrated using graphs, and the computation of the bounds is presented in tables for various matrix sizes. It also finds regions for the inclusion of the eigenvalues and numerical ranges of these matrices. A new decomposition of the Wasserstein matrix in terms of the Hadamard product can also be seen. Also, a  decomposition of the Hadamard inverse of the Wasserstein matrix is obtained. Finally, a few bounds on the condition number are obtained using numerical radius inequalities.   
\end{abstract}
                            \begin{keyword}
Wasserstein metric, positive definite matrix, matrix norms, eigenvalues, numerical range.

\vspace{0.2cm}
{\it Mathematics Subject Classification}: 15A18, 15A60, 15B05, 65F05, 65H10. 
\end{keyword}
\end{frontmatter}
                              \section{Introduction}\label{sec:intro}
                              The {\it Wasserstein metric} is a measure of the distance between two probability distributions by determining the minimum cost of transforming one distribution into the other.  It is also known as the {\it earth mover's distance}.
In the theory of optimal transport, the Wasserstein distance measures the minimal cost required to change one probability distribution into another, where cost is defined in terms of both the quantity of mass to be moved and the distance over which it must be transported. This metric plays a pivotal role in numerous applications in many fields such as training of generative adversarial networks (GANs), pattern recognition, image processing, machine learning, optimal transport problem, coding theory, seismology \cite{arjovsky2017, blanchet2019,  engquist2013arxiv, kolouri2016, kolouri2017, ling2007, rubner2000, solomon2015}, etc. Bhatia {\it{et al.}} \cite{bhatia2019.EM, bhatia2019.LAA} also studied some fundamental properties of the Wasserstein distance and the Wasserstein mean of two Hermitian positive definite matrices from the perspective of matrix analysis.

\subsection{Motivation}
Despite its theoretical appeal, computing the Wasserstein distance efficiently and accurately remains a significant challenge. To overcome this, the Sinkhorn algorithm, which is an iterative method, is implemented in many problems \cite{cuturi2013, liao2022, liao2024, sinkhorn1967, sinkhorn1974} due to its efficient numerical stability and faster convergence through iterative updates. In this algorithm, the crucial component is the multiplication of the kernel matrix \cite{liao2022, liao2024} or the Wasserstein metric matrix \cite{bai2024} with a vector. So, the problem of solving this matrix-vector product at the minimum cost is required \cite{liao2022}. This is a connection between Wasserstein distance in optimal transport theory and  matrix analysis. Before we start our discussion on how and where this Wasserstein metric matrix is coming from, let us explore the Wasserstein distance and how they are connected to matrix theory.\\ 

\noindent Let $u$ and $v$ be two probability distributions defined on a metric space $(X, d)$.
The \textit{Wasserstein distance} of order $p$ between $u$ and $v$ is defined by the formula.
\begin{equation}
    W_p(u, v) = \left( \inf_{\gamma \in \Gamma(u, v)} \int_{X \times X} d(x, y)^p \, d\gamma(x, y) \right)^{1/p}
\end{equation}
Here $\Gamma(u, v)$ is the set of all joint distributions with marginals $u$ and $v$, $d(s, t)$ is the distance between points $s$ and $t$ and, $\gamma$ is joint probability measure on $X\times X$ . For $p = 2$, the Wasserstein distance metric is equivalently represented as distance between two PSD matrices in $\mathbb{P}(n)$, a set of all PSD matrices of size $n \times n$ is established in \cite{bhatia2019.EM, bhatia2019.LAA, hayoung24}. In the last few years, researchers have explored such Wasserstein distance metric related to Wasserstein Riemannian metric, and also studied on Wasserstein mean and Wasserstein barycentre. But, for $p=1$, we have the Wasserstein-1 metric, and our present work is based on matrix appears while solving the following discretized version,
$$W_\varepsilon(u^h,v^h)=\displaystyle\inf_{\gamma_{ij}}\displaystyle \sum_{i=1}^{n}\sum_{j=1}^{n}[\gamma_{ij}|i-j|h+\varepsilon \gamma_{ij} \ln(\gamma_{ij})],$$
s.t. $$\sum_{i=1}^{n}\gamma_{ij}=u_{i},\; \sum_{j=1}^{n}\gamma_{ij}=v_{j},\; \gamma_{ij}\geq0,\; i,j=1,2,\dots,n,$$
where $$u^h=(u_1,u_2,\dots,u_n)^T~\text{and}~v^h=(v_1,v_2,\dots,v_n)^T$$ 
are discrete distributions derived from the density functions $u(x)$ and $v(y)$ defined on $\Omega\subset \mb{R}$, $h$ is the grid size on the uniform meshgrid, of the entropy-regularized Wasserstein-$1$ metric Kantorovich formulation,
$$W_\varepsilon(u,v)=\displaystyle\inf_{\gamma(x,y)\in\Gamma}\int_{\Omega \times \Omega}|x-y|\gamma(x,y)dxdy+\varepsilon \gamma(x,y) \ln(\gamma(x,y))dxdy,$$
$$\Gamma=\Big\{\gamma(x,y):\;\int_{\Omega}\gamma(x,y)dy=u(x),\;\int_{\Omega}\gamma(x,y)dx=v(y)\Big\}$$
using the Lagrangian multiplier method. Bai \cite{bai2024} defined the {\it kernel matrix} \cite{liao2022} as follows:
$$Q=(q_{ij})\in\mb{R}^{n\times n},~\text{with}~q_{ij}=\lambda^{|i-j|},~~i,j=1,2,\dots,n,$$
where $\lambda=e^{-h/\varepsilon},$ $\varepsilon>0$ is the regularization parameter. And, he named it as the {\it Wasserstein metric matrix}. 
                       \subsection{Related works and problem}
Bai \cite{bai2024, bai2025} studied some concrete structures, and essential algebraic and computational properties of the one-dimensional, two-dimensional and generalized Wasserstein-$1$ metric matrices on the matrix level.
In 2024, Bai \cite{bai2024} provided two representations of a Wasserstein-$1$ metric matrix and proved that it is a positive, Toeplitz, and symmetric matrix. He estimated upper bounds for their $1$ and $\infty$-norms. Under some restrictions on the regularization parameter and the discrete gridsize, he also proved nonsingularity of these matrices and found upper bounds for $1$ and $\infty$-norms of their inverses and condition numbers. After that, in 2025, Bai \cite{bai2025} removed those restrictions and again proved the nonsigularity by providing a decomposition of these matrices and estimated the upper bounds for their inverses and condition numbers with respect to  $1$ and $\infty$-norms. Now, we recall these results in combined form by understanding these matrices.
The explicit form of the one-dimensional Wasserstein-$1$ metric matrix \cite{bai2024} is
                                    \begin{equation}\label{eq:exp Q L}
Q=\begin{bmatrix}
 1 & \lambda & \lambda^2 & \dots & \lambda ^{n-1}\\
     \lambda & 1 & \lambda & \dots & \lambda^{n-2}\\
     \lambda^2 & \lambda & 1 & \dots & \lambda^{n-3}\\
     \vdots & \vdots & \vdots & \dots & \vdots \\
     \lambda ^{n-1} & \lambda ^{n-2} &\lambda ^{n-3} & \dots & 1
     \end{bmatrix}.
     \end{equation}
Setting $L=\begin{bmatrix}
     0 & 0 & 0 & \dots & 0& 0\\
     \lambda & 0 & 0 & \dots& 0 & 0\\
     \lambda^2 & \lambda & 0 & \dots & 0& 0\\
     \vdots & \vdots & \vdots & \ddots & \vdots &\vdots\\
     \lambda ^{n-1} & \lambda ^{n-2} &\lambda ^{n-3} & \dots & \lambda& 0
     \end{bmatrix}$, we have $Q=I+L+L^T$.  
Bai \cite{bai2025} also proved that 
                                      \begin{eqnarray}
     Q&=&(I-\lambda N)^{-1}+(I-\lambda N^T)^{-1}-I,\label{eq:exp Q N}\\
     (I-\lambda N^T)Q(I-\lambda N)&=&\hat{D}_{\lambda},\label{eq:exp Q D}\\
      (I-\lambda N)^{-1} Q^{-1}(I-\lambda N^T)^{-1}&=&\hat{D}_{\lambda}^{-1}\label{eq:Q inv D}\\
Q^{-1}&=&(I-\lambda N)\hat{D}_{\lambda}^{-1}(I-\lambda N^T)\label{eq:Q inv D2}
\end{eqnarray}
where 
     $$N=\begin{bmatrix}
         0 &0 &0 &\dots &0 &0\\
         1 &0 &0 &\dots &0 &0\\
         0 &1 &0 &\dots &0 &0\\
        \vdots & \vdots & \vdots & \ddots & \vdots & \vdots\\
        0 &0 &0 &\dots &0 &0\\
         0 &0 &0 &\dots &1 &0\\
     \end{bmatrix},~\hat{D}_{\lambda}= \begin{bmatrix}
         {1-\lambda^2} &0 &0 &\dots &0 &0\\
         0 &{1-\lambda^2} &0 &\dots &0 &0\\
         0 &0 &{1-\lambda^2} &\dots &0 &0\\
        \vdots & \vdots & \vdots & \ddots & \vdots & \vdots\\
        0 &0 &0 &\dots &{1-\lambda^2} &0\\
         0 &0 &0 &\dots &0 &1\\
     \end{bmatrix}~\text{and}~
     $$    
     $$
     I-\lambda N=\begin{bmatrix}
         1 &0 &0 &\dots &0 &0\\
         -\lambda &1 &0 &\dots &0 &0\\
         0 &-\lambda &1 &\dots &0 &0\\
        \vdots & \vdots & \vdots & \ddots & \vdots & \vdots\\
        0 &0 &0 &\dots &1 &0\\
         0 &0 &0 &\dots &-\lambda &1\\
     \end{bmatrix}.$$
After constructing $B\in \mb{R}^{n \times n}$ as 
 $$B=\begin{bmatrix}
     1 & 0 & 0 & \dots & 0\\
     \lambda & 1 & 0 & \dots & 0\\
     \lambda^2 & \lambda & 1 & \dots & 0\\
     \vdots & \vdots & \vdots & \dots & \vdots \\
     \lambda ^{n-1} & \lambda ^{n-2} &\lambda ^{n-3} & \dots & 1
     \end{bmatrix},$$ we observe that 
     $(I-\lambda N)B=B (I-\lambda N)=I.$
     Therefore, we have $(I-\lambda N)^{-1}=B.$
\begin{proposition}(Inverse of $A=I - aN$)\label{prop:inv}\\
If $A=\begin{bmatrix}
         1 &0 &0 &\dots &0 &0\\
         -a &1 &0 &\dots &0 &0\\
         0 &-a &1 &\dots &0 &0\\
        \vdots & \vdots & \vdots & \ddots & \vdots & \vdots\\
        0 &0 &0 &\dots &1 &0\\
         0 &0 &0 &\dots &-a &1\\
     \end{bmatrix},$ $a\in \mb{R}$, then $A^{-1}=\begin{bmatrix}
     1 & 0 & 0 & \dots & 0\\
     a & 1 & 0 & \dots & 0\\
     a^2 & a & 1 & \dots & 0\\
     \vdots & \vdots & \vdots & \dots & \vdots \\
     a ^{n-1} & a ^{n-2} & a^{n-3} & \dots & 1
     \end{bmatrix}.$    
\end{proposition}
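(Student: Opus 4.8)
The plan is to exploit the nilpotency of the subdiagonal shift matrix $N$ and to expand $(I-aN)^{-1}$ as a terminating Neumann series. First I would record the key structural fact about the powers of $N$: for each integer $k\geq 0$, the matrix $N^k$ has entries equal to $1$ on its $k$-th subdiagonal (that is, in every position $(i,j)$ with $i-j=k$) and $0$ elsewhere; in particular $N^n=0$, so $N$ is nilpotent of index $n$. This is established by a short induction on $k$, using that left-multiplication by $N$ shifts the nonzero band down by exactly one row.

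Since $N$ is nilpotent, the formal Neumann series $\sum_{k\geq 0}(aN)^k=\sum_{k=0}^{n-1}a^{k}N^{k}$ terminates and is therefore a genuine matrix. I would then verify that it inverts $A=I-aN$ by telescoping: $(I-aN)\sum_{k=0}^{n-1}a^{k}N^{k}=\sum_{k=0}^{n-1}a^{k}N^{k}-\sum_{k=1}^{n}a^{k}N^{k}=I-a^{n}N^{n}=I$, with the identical cancellation on the other side. Hence $A^{-1}=\sum_{k=0}^{n-1}a^{k}N^{k}$. Reading off the $(i,j)$ entry of this sum via the band structure of the $N^{k}$, the only term that contributes to a position $(i,j)$ with $i\geq j$ is $k=i-j$, giving the value $a^{i-j}$, while positions with $i<j$ receive no contribution and so vanish. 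This is precisely the asserted lower-triangular matrix.

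As a self-contained alternative that avoids series, I could instead verify $AB=BA=I$ directly by entrywise multiplication, where $B$ denotes the claimed inverse. Because $A$ has nonzero entries only on its diagonal (equal to $1$) and its first subdiagonal (equal to $-a$), the product collapses to $(AB)_{ij}=B_{ij}-a\,B_{i-1,j}$, which equals $a^{i-j}-a\cdot a^{i-1-j}=0$ for $i>j$, equals $1$ for $i=j$, and vanishes for $i<j$; the mirror computation disposes of $BA$. I do not anticipate any genuine obstacle, since the result is a routine consequence of nilpotency; the only points needing a moment's care are confirming the subdiagonal location of the nonzero band of $N^{k}$ and checking the boundary cases $i=j$ and $i=j+1$ in the telescoping (equivalently, in the entrywise) step. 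Finally, I note that the special case $a=\lambda$ is exactly the identity $(I-\lambda N)^{-1}=B$ already recorded just above the statement, so this proposition is its verbatim generalization to arbitrary $a\in\mb{R}$.
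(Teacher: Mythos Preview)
Your proposal is correct. The paper does not give a separate proof of this proposition: it merely records, for the special case $a=\lambda$, the verification $(I-\lambda N)B=B(I-\lambda N)=I$ and then states the general proposition without further argument. Your direct entrywise alternative is exactly that verification written out in full, so on that route you coincide with the paper. Your primary Neumann-series argument, exploiting the nilpotency $N^n=0$ to write $(I-aN)^{-1}=\sum_{k=0}^{n-1}a^kN^k$, is a genuinely different and more conceptual route: it explains \emph{why} the inverse has the claimed band structure (each $N^k$ contributes the $k$-th subdiagonal) rather than simply checking it after the fact, and it generalizes immediately to any nilpotent perturbation of the identity. The paper's bare verification is shorter; your series argument is more illuminating and would, for instance, adapt without change to block-shift matrices or to $I-aN^T$.
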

Some results about the one-dimensional Wasserstein-$1$ metric matrices are recalled here.
                            \begin{theorem}(Theorem $2.1$, \cite{bai2024} \& Theorem $2.2$, \cite{bai2025})\label{thm:wass one dim norm bounds}\\
 For the one-dimensional Wasserstein-$1$ metric  $Q\in \mathbb{R}^{n\times n},$ the assertions listed below are valid:
 \begin{enumerate}[(i)]
     \item $Q$ is positive, symmetric, Toeplitz, and positive definite.
     \item $ \|Q\|_{\infty}=\|Q\|_1 <\frac{1+\lambda}{1-\lambda}.$
     \item  $Q$ is invertible and satisfy     
     $\|Q^{-1}\|_{\infty}=\|Q^{-1}\|_1\leq \frac{1+\lambda}{1-\lambda}.$
     \item $\kappa_{\infty}(Q)=\kappa_1(Q)<\frac{(1+\lambda)^2}{(1-\lambda)^2}.$
 \end{enumerate}
\end{theorem}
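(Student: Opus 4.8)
The plan is to handle the four assertions in order, leaning on the entrywise description $q_{ij}=\lambda^{|i-j|}$ together with the factorizations \eqref{eq:exp Q D} and \eqref{eq:Q inv D2} already recorded above, keeping in mind throughout that $\lambda=e^{-h/\varepsilon}\in(0,1)$.

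For (i), positivity of the entries, symmetry, and the Toeplitz structure are immediate, since $q_{ij}=\lambda^{|i-j|}=\lambda^{|j-i|}>0$ depends only on $i-j$. For positive definiteness I would invoke the congruence supplied by \eqref{eq:exp Q D}: because $(I-\lambda N)$ is invertible with inverse $B$, rewriting that identity gives $Q=(I-\lambda N^T)^{-1}\hat{D}_{\lambda}(I-\lambda N)^{-1}$, and since $(I-\lambda N^T)^{-1}=[(I-\lambda N)^{-1}]^{T}=B^{T}$ this displays $Q=B^{T}\hat{D}_{\lambda}B$ as congruent to the diagonal matrix $\hat{D}_{\lambda}$, whose diagonal entries are $1-\lambda^{2}>0$ and $1>0$. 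By Sylvester's law of inertia $Q$ is therefore positive definite, and in particular invertible, which also settles the first clause of (iii).

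For the norm statements I would first record the structural simplification that, since $Q$ and hence $Q^{-1}$ are symmetric, the maximal absolute column sum equals the maximal absolute row sum, so $\|Q\|_1=\|Q\|_{\infty}$ and $\|Q^{-1}\|_1=\|Q^{-1}\|_{\infty}$ hold automatically; only the $\infty$-norms need to be estimated. For (ii) I would compute the $i$-th absolute row sum as $\sum_{j=1}^{n}\lambda^{|i-j|}=1+\sum_{k=1}^{i-1}\lambda^{k}+\sum_{k=1}^{n-i}\lambda^{k}$ and bound each finite geometric sum strictly by $\lambda/(1-\lambda)$, so every row sum lies strictly below $1+2\lambda/(1-\lambda)=(1+\lambda)/(1-\lambda)$, which yields the strict bound on $\|Q\|_{\infty}$. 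For (iii), rather than inverting $Q$ directly, I would apply submultiplicativity to \eqref{eq:Q inv D2}, namely $\|Q^{-1}\|_{\infty}\le\|I-\lambda N\|_{\infty}\,\|\hat{D}_{\lambda}^{-1}\|_{\infty}\,\|I-\lambda N^T\|_{\infty}$, where the two bidiagonal factors each have $\infty$-norm $1+\lambda$ and $\|\hat{D}_{\lambda}^{-1}\|_{\infty}=1/(1-\lambda^{2})$, so the product telescopes to $(1+\lambda)^{2}/(1-\lambda^{2})=(1+\lambda)/(1-\lambda)$.

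Finally (iv) follows by multiplying the two estimates, giving $\kappa_{\infty}(Q)=\|Q\|_{\infty}\|Q^{-1}\|_{\infty}<[(1+\lambda)/(1-\lambda)]^{2}=(1+\lambda)^{2}/(1-\lambda)^{2}$, with the strictness inherited from (ii), and $\kappa_1(Q)=\kappa_{\infty}(Q)$ again by symmetry. I expect the only genuinely delicate point to be the bookkeeping of strict versus non-strict inequalities: the row-sum estimate in (ii) is strict for every $n$, whereas the submultiplicative bound in (iii) is only $\le$ (indeed equality $\|Q^{-1}\|_{\infty}=(1+\lambda)/(1-\lambda)$ is attained by the middle rows once $n\ge 3$), so some care is needed to route the strictness correctly into the condition-number bound in (iv).
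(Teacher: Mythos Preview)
Your argument is correct in every part, and the bookkeeping on strict versus non-strict inequalities is handled properly. Note, however, that the present paper does \emph{not} supply its own proof of this theorem: the statement is merely recalled from \cite{bai2024,bai2025} as background, so there is no in-paper argument to compare against line by line. That said, the tools you invoke---the congruence \eqref{eq:exp Q D} for positive definiteness and the factorization \eqref{eq:Q inv D2} for the bound on $\|Q^{-1}\|_{\infty}$---are precisely the decompositions the paper records from Bai's 2025 article, so your approach is the intended one and matches what the cited references do. Your side remark that the bound in (iii) is actually attained for $n\ge 3$ is also correct (the interior rows of $Q^{-1}$ have absolute row sum exactly $(1+\lambda)/(1-\lambda)$), which confirms that the non-strict inequality there cannot be sharpened and that the strictness in (iv) must come from (ii).
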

We now recall the two-dimensional Wasserstein-$1$ metric matrices \cite{bai2024}.
The two-dimensional  Wasserstein-$1$ metric matrix $Q\in \mathbb{R}^{nm\times nm}$ is defined as the Kronecker product between the matrices $Q_{[2]}\in \mathbb{R}^{m\times m}$ and $Q_{[1]}\in \mathbb{R}^{n\times n}$, i.e.,
     $$Q=Q_{[2]}\otimes Q_{[1]},$$ 
 where 
     $$Q_{[1]}=\begin{bmatrix}
     1 & \lambda_1 & \lambda_1^2 & \dots & \lambda_1 ^{n-1}\\
     \lambda_1 & 1 & \lambda_1 & \dots & \lambda_1^{n-2}\\
     \lambda_1^2 & \lambda_1 & 1 & \dots & \lambda_1^{n-3}\\
     \vdots & \vdots & \vdots & \dots & \vdots \\
     \lambda_1^{n-1} & \lambda_1 ^{n-2} &\lambda_1^{n-3} & \dots & 1
     \end{bmatrix}
     ~ \text{and}~ 
     Q_{[2]}=\begin{bmatrix}
     1 & \lambda_2 & \lambda_2^2 & \dots & \lambda_2 ^{m-1}\\
     \lambda_2 & 1 & \lambda_2 & \dots & \lambda_2^{m-2}\\
     \lambda_2^2 & \lambda_2 & 1 & \dots & \lambda_2^{m-3}\\
     \vdots & \vdots & \vdots & \dots & \vdots \\
     \lambda_2 ^{m-1} & \lambda_2 ^{m-2} &\lambda_2 ^{m-3} & \dots & 1
     \end{bmatrix}
     $$
are the one-dimensional Wasserstein-$1$ metric matrices. Here, we recall some existing results on this matrix.
                                \begin{theorem}(Theorem $3.1$, \cite{bai2024} \& Theorem $3.2$, \cite{bai2025})\label{thm:wass two dim norm bounds}\\
 For the two-dimensional Wasserstein-$1$ metric matrix $Q\in \mathbb{R}^{nm\times nm},$ the subsequent assertions are satisfied:
                                            \begin{enumerate}[(i)]
     \item $Q$ is positive, symmetric, block-Toeplitz with Toeplitz-block (BTTP), and positive definite.
     \item $ \|Q\|_{\infty} = \|Q\|_1<\frac{(1+\lambda_1)(1+\lambda_2)}{(1-\lambda_1)(1-\lambda_2)}.$
     \item $Q$ is invertible and satisfy 
       $\|Q^{-1}\|_{\infty}=\|Q^{-1}\|_1\leq \frac{(1+\lambda_1)(1+\lambda_2)}{(1-\lambda_1)(1-\lambda_2)}.$
     \item $\kappa_{\infty}(Q)=\kappa_1(Q)<\frac{(1+\lambda_1)^2(1+\lambda_2)^2}{(1-\lambda_1)^2(1-\lambda_2)^2}.$
 \end{enumerate}
\end{theorem}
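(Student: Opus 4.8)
The plan is to reduce every claim to the corresponding one-dimensional statement in Theorem \ref{thm:wass one dim norm bounds} by exploiting the Kronecker factorization $Q = Q_{[2]} \otimes Q_{[1]}$ together with the standard algebraic identities $(A \otimes B)^T = A^T \otimes B^T$, $(A \otimes B)^{-1} = A^{-1} \otimes B^{-1}$ (when the factors are invertible), and the fact that the eigenvalues of $A \otimes B$ are exactly the products of the eigenvalues of $A$ and $B$. First I would dispose of part (i): each entry of $Q$ is a product of a positive entry of $Q_{[2]}$ and a positive entry of $Q_{[1]}$, so $Q$ is entrywise positive; symmetry follows from $Q^T = Q_{[2]}^T \otimes Q_{[1]}^T = Q_{[2]} \otimes Q_{[1]} = Q$; the BTTP structure is immediate from the block pattern of the Kronecker product, whose $(i,j)$ block equals $\lambda_2^{|i-j|} Q_{[1]}$ with $Q_{[1]}$ itself Toeplitz; and positive definiteness follows because the eigenvalues of $Q$ are products of the strictly positive eigenvalues of $Q_{[1]}$ and $Q_{[2]}$.

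The key analytic ingredient for parts (ii) and (iii) is the multiplicativity of the $1$- and $\infty$-norms under the Kronecker product, namely $\|A \otimes B\|_p = \|A\|_p \|B\|_p$ for $p \in \{1, \infty\}$. I would prove this directly from the column-sum (respectively row-sum) description of these norms: the absolute column sum of the column of $A \otimes B$ indexed by the pair $(j, \ell)$ factors as $\left( \sum_i |a_{ij}| \right)\left( \sum_k |b_{k\ell}| \right)$, and maximizing over $(j, \ell)$ separates into the product of the two maxima. With this in hand, part (ii) follows by writing $\|Q\|_1 = \|Q_{[2]}\|_1 \|Q_{[1]}\|_1$ and invoking the strict one-dimensional bounds, while $\|Q\|_\infty = \|Q\|_1$ is a consequence of the symmetry of $Q$ (since $\|Q\|_1 = \|Q^T\|_\infty$). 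For part (iii), invertibility comes from $Q^{-1} = Q_{[2]}^{-1} \otimes Q_{[1]}^{-1}$, and the same multiplicativity applied to $Q^{-1}$, whose factors are again symmetric, gives $\|Q^{-1}\|_\infty = \|Q^{-1}\|_1 = \|Q_{[2]}^{-1}\|_1 \|Q_{[1]}^{-1}\|_1$, which is bounded by the product of the one-dimensional inverse-norm bounds.

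Finally, part (iv) follows by combining the previous two: $\kappa_p(Q) = \|Q\|_p \|Q^{-1}\|_p = \kappa_p(Q_{[2]}) \kappa_p(Q_{[1]})$ for $p \in \{1, \infty\}$, and the one-dimensional condition-number bounds multiply to give the stated estimate; the inequality remains strict because the bound from part (ii) is strict while the one from part (iii) is non-strict and all factors are positive. I expect the only step requiring genuine care to be the Kronecker multiplicativity of the $1$- and $\infty$-norms, together with the bookkeeping of which inequalities are strict versus non-strict; everything else is a transcription of the scalar bounds through the product structure. One small point to watch is that the equalities $\|Q\|_\infty = \|Q\|_1$ and $\|Q^{-1}\|_\infty = \|Q^{-1}\|_1$ should be argued from symmetry rather than re-derived from the factorization, so as to keep the argument uniform with the one-dimensional case.
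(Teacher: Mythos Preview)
The paper does not supply a proof of this theorem: it is stated as a cited result from Bai \cite{bai2024,bai2025}, and no argument is given in the present article. Your proof is correct, and the strategy---reducing every assertion to the one-dimensional Theorem~\ref{thm:wass one dim norm bounds} via the Kronecker identities $(A\otimes B)^T=A^T\otimes B^T$, $(A\otimes B)^{-1}=A^{-1}\otimes B^{-1}$, spectrum multiplicativity, and $\|A\otimes B\|_p=\|A\|_p\|B\|_p$---is exactly the mechanism the paper itself employs when proving its own refined two-dimensional bounds in Section~\ref{sec:2 dim wass mat} (see properties (R1)--(R4) and the proof of the theorem there). So while there is no in-paper proof to compare against directly, your approach is both sound and fully aligned with the paper's methodology for the two-dimensional case.
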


Building upon the foundational work in \cite{bai2024, bai2025}, this study advances the analysis of upper bounds by employing alternative representations and matrix decompositions, including those of their inverses. In addition, the article investigates various spectral and numerical range properties associated with one- and two-dimensional Wasserstein-$1$ metric matrices. The central objectives of this work are as follows:\\
(i) to establish both lower and upper bounds for the $1,2,~\text{and}~\infty$-norms of the matrices;\\
(ii) to refine the existing upper bounds for $\|Q\|_1$ and the condition number $\kappa_1(Q)$;\\
(iii) to determine regions enclosing the eigenvalues and numerical ranges of these matrices; \\
(iv) to decompose the one-dimensional Wasserstein-$1$ metric matrix and  its Hadamard inverse as the product of two matrices,\\
(v) to find any other decomposition and\\
(vi) to find the determinant of Wasserstein-$1$ metric matrices.

                              \subsection{Contributions}
The key contributions of this article are summarized below. 
                                        \begin{itemize}
    \item Some upper and lower bounds on the $1,2,~\text{and}~\infty$-norm of the one and two-dimensional Wasserstein-$1$ metric matrices, their inverses, and their condition numbers are provided which are much sharper than those provided in Theorem \ref{thm:wass one dim norm bounds} and Theorem \ref{thm:wass two dim norm bounds} for the $1$ and $\infty$-norms. The results are further demonstrated through graphical representations, with the computed bounds tabulated for matrices of varying sizes using MATLAB. 
    \item In addition, some regions are estimated for the eigenvalues and numerical ranges of the Wasserstein-$1$ metric matrices of dimensions one and two along with the determinant. 
    \item Different ways to decompose the one-dimensional Wasserstein-$1$ metric matrix are explored through the notion of the Cayley transform of a matrix and the Hadamard product of matrices.  Further, a decomposition of the Hadamard inverse of the Wasser-
stein matrix is obtained. A way to find the condition number through the numerical radius of the matrix is also established. 
\end{itemize}
                        \section{Background results}\label{sec:Preliminaries}
This section recalls some background knowledge and notations to build the main structure of the article. $\mathbb{R}^n (\mathbb{C}^n)$ and $\mathbb{R}^{n\times n}(\mathbb{C}^{n\times n})$ denote the set of all $n$-dimensional real (complex) vectors and the set of all $n\times n$ real (complex) matrices, respectively. We use $\|\cdot\|_1,~\|\cdot\|_2,$ and $\|\cdot\|_{\infty}$ to denote the $1$-norm, $2$-norm, and $\infty$-norm of a matrix, respectively. $(\cdot)^T((\cdot)^*)$ represents the transpose (conjugate transpose) of a vector or a matrix. For a nonsingular matrix $A$, $\kappa_{\#}(A)=\|A\|_{\#}\|A^{-1}\|_{\#}$ indicates the condition number of $A,$ where $\#\in \{1,2,\infty\}.$ $\sigma(A)$ and $\rho(A)$ stand for the set of all eigenvalues of $A$,  the spectral radius of $A.$ 
We recall some results related to the eigenvalues of a matrix, which will be used to prove our main results.
                      \begin{theorem}(Theorem 6.1.1, \cite{horn1985})\label{thm:spectrum in gersgorin}\\
Let $A \in \mathbb{C}^{n\times n}.$ Then, 
$$\sigma(A) \subset G(A)= \displaystyle \bigcup_{i=1}^{n}\{z \in \mathbb{C}:\; |z-a_{ii}|\leq R_i'(A)\},$$
    \text{where} $R_i'(A)=\displaystyle \sum_{\substack {j=1 \\ j\neq i}}^{n} a_{ij}$ for any $i=1,2,\dots,n.$
\end{theorem}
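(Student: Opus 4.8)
The plan is to show that an arbitrary eigenvalue $\lambda \in \sigma(A)$ must lie in at least one of the $n$ Gershgorin disks $\{z \in \mathbb{C} : |z - a_{ii}| \le R_i'(A)\}$, from which the inclusion $\sigma(A) \subset G(A)$ follows immediately by taking the union over $i$. To carry this out I would fix a nonzero eigenvector $x = (x_1, x_2, \dots, x_n)^T$ satisfying $Ax = \lambda x$, and then select an index $k$ at which the coordinate modulus is maximal, i.e.\ $|x_k| = \max_{1 \le j \le n} |x_j|$. Because $x \neq 0$, this maximal modulus is strictly positive, which is the feature that makes the whole argument go through.

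The key step is to read off the $k$-th scalar equation of $Ax = \lambda x$, namely $\sum_{j=1}^n a_{kj} x_j = \lambda x_k$, and to isolate the diagonal term so that $(\lambda - a_{kk}) x_k = \sum_{j \ne k} a_{kj} x_j$. Taking moduli and applying the triangle inequality to the right-hand side, I would estimate $|\lambda - a_{kk}|\,|x_k| \le \sum_{j \ne k} |a_{kj}|\,|x_j| \le \Big(\sum_{j \ne k} |a_{kj}|\Big)|x_k| = R_k'(A)\,|x_k|$, where the second inequality uses the maximality $|x_j| \le |x_k|$ for every $j$. Dividing by $|x_k| > 0$ then yields $|\lambda - a_{kk}| \le R_k'(A)$, which places $\lambda$ in the $k$-th disk.

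The main obstacle, if one can call it that for so classical a result, is conceptual rather than computational: one must commit to the largest-modulus coordinate before doing any estimation, since it is precisely the bound $|x_j| \le |x_k|$ that converts the weighted sum $\sum_{j \ne k} |a_{kj}|\,|x_j|$ into the unweighted radius $R_k'(A)$. Any other choice of coordinate would leave stray factors $|x_j|/|x_k|$ that need not be bounded by $1$. Since $\lambda$ was an arbitrary element of $\sigma(A)$ and the index $k$ depends on $\lambda$, passing to the union over $i = 1, \dots, n$ absorbs this dependence and gives $\sigma(A) \subset G(A)$, completing the argument. (I would also note, in passing, that the radius is understood with absolute values, $R_i'(A) = \sum_{j \ne i} |a_{ij}|$, as is clear from the estimate above.)
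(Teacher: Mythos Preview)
Your argument is correct and is exactly the classical proof of the Ger\v{s}gorin theorem found in Horn and Johnson. Note, however, that the paper does not actually supply its own proof of this statement: it is merely recalled from \cite{horn1985} as a background result, so there is no ``paper's proof'' to compare against beyond the original reference, whose argument coincides with yours. Your parenthetical remark that $R_i'(A)$ should be read as $\sum_{j\ne i}|a_{ij}|$ is well taken; the paper's displayed formula omits the absolute values, but the theorem (and its subsequent use in the paper, where all entries of $Q$ are nonnegative anyway) only makes sense with them.
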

A nonnegative (positive) matrix means that its each entry is nonnegative (positive). 
                             \begin{theorem}(Lemma 8.4.2, \cite{horn1985})\label{thm:spctrl rds}\\
If $A \in \mathbb{R}^{n\times n}$ is nonnegative, then $\rho(I+A)=1+\rho(A).$   
\end{theorem}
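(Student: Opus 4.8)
The plan is to combine the elementary spectral-shift identity with the Perron--Frobenius property of nonnegative matrices. First I would observe that the spectrum of $I+A$ is obtained from that of $A$ by a shift: if $Ax=\mu x$ for some nonzero $x$, then $(I+A)x=(1+\mu)x$, and this correspondence is a bijection between $\sigma(A)$ and $\sigma(I+A)$. Consequently $\sigma(I+A)=\{1+\mu:\mu\in\sigma(A)\}$ and therefore $\rho(I+A)=\max_{\mu\in\sigma(A)}|1+\mu|$.

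The decisive ingredient is the Perron--Frobenius theorem, which guarantees that for a nonnegative matrix $A$ the spectral radius $\rho(A)$ is itself an eigenvalue of $A$. I would use this to establish the lower bound: since $\rho(A)\in\sigma(A)$, the real number $1+\rho(A)$ belongs to $\sigma(I+A)$, and because $\rho(A)\ge 0$ we have $|1+\rho(A)|=1+\rho(A)$. Hence $\rho(I+A)\ge 1+\rho(A)$.

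For the matching upper bound, I would argue directly from the definition of spectral radius together with the triangle inequality. For every eigenvalue $\mu$ of $A$ we have $|1+\mu|\le 1+|\mu|\le 1+\rho(A)$, and taking the maximum over all $\mu\in\sigma(A)$ yields $\rho(I+A)\le 1+\rho(A)$. Combining this with the lower bound from the previous paragraph gives the claimed equality $\rho(I+A)=1+\rho(A)$.

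The only genuine obstacle is the Perron--Frobenius step. For a general matrix the identity can fail whenever $\rho(A)$ is attained at a nonreal eigenvalue $\mu$, in which case $|1+\mu|$ may be strictly smaller than $1+\rho(A)$; the hypothesis that $A$ is nonnegative is used precisely to force $\rho(A)$ itself to be a nonnegative real eigenvalue of $A$, which is what makes the lower bound sharp. Once this is in hand the remaining steps are entirely elementary.
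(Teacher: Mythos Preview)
Your argument is correct and is the standard proof of this fact. Note, however, that the paper does not actually supply its own proof of this statement: it is quoted as Lemma~8.4.2 of Horn--Johnson \cite{horn1985} and used as a black box, so there is no in-paper proof to compare against. Your spectral-shift plus Perron--Frobenius derivation is precisely the textbook argument and would be an appropriate justification if one were required.
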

                           \begin{theorem}(Problem 8.3.P10, \cite{horn1985})\label{thm:spctrl rds1}\\
If $A \in \mathbb{R}^{n\times n}$ is nonnegative, then $\rho(\text{Re}(A))=\rho(\frac{A+A^T}{2})\geq \rho(A).$
\end{theorem}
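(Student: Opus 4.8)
The equality $\rho(\mathrm{Re}(A))=\rho\!\left(\frac{A+A^{T}}{2}\right)$ is immediate and carries no content: since $A$ is real, its conjugate transpose is $A^{T}$, so the Hermitian (real) part $\mathrm{Re}(A)=\frac{A+A^{*}}{2}$ coincides as a matrix with the symmetric matrix $S:=\frac{A+A^{T}}{2}$. Thus the whole substance of the statement is the inequality $\rho(S)\ge\rho(A)$, and the plan is to establish it by combining the Perron--Frobenius theorem with the Rayleigh-quotient characterization of the largest eigenvalue of the symmetric matrix $S$.

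First I would invoke the Perron--Frobenius theorem for nonnegative matrices \cite{horn1985}: since $A\ge 0$, the spectral radius $\rho(A)$ is itself an eigenvalue of $A$, and there exists a corresponding eigenvector $x\ge 0$, which I normalize so that $x^{T}x=1$. Hence $Ax=\rho(A)\,x$. Next I would evaluate the Rayleigh quotient of $S$ at this vector. Because $x^{T}A^{T}x$ is a scalar it equals its own transpose, so $x^{T}A^{T}x=x^{T}Ax$; consequently
\[
x^{T}Sx=\tfrac12\left(x^{T}Ax+x^{T}A^{T}x\right)=x^{T}Ax=\rho(A)\,x^{T}x=\rho(A).
\]
Since $S$ is real symmetric, all its eigenvalues are real and $\lambda_{\max}(S)=\max_{\|y\|_{2}=1}y^{T}Sy\ge x^{T}Sx=\rho(A)$. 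Finally $\rho(S)=\max_{i}|\lambda_{i}(S)|\ge\lambda_{\max}(S)\ge\rho(A)$, which is exactly the desired inequality.

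The argument involves no genuine obstacle; the single point that must be handled with care is the appeal to Perron--Frobenius guaranteeing that the eigenvector realizing $\rho(A)$ can be chosen real and nonnegative, so that the Rayleigh quotient $x^{T}Sx$ is meaningful and reduces to $\rho(A)$. This is precisely where the hypothesis $A\ge 0$ is essential: for a general real matrix the eigenvector attaining $\rho(A)$ need not be real, and the computation above would collapse. I note that the same Perron--Frobenius input underlies Theorem~\ref{thm:spctrl rds} stated just above, so it is natural to reuse it here.
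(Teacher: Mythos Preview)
Your proof is correct and follows essentially the same route as the paper's: invoke Perron--Frobenius to obtain a real (nonnegative) unit eigenvector $x$ with $Ax=\rho(A)x$, compute $x^{T}Sx=\rho(A)$, and bound this by the maximal Rayleigh quotient of the symmetric matrix $S$. Your version is in fact slightly more careful, since you distinguish $\lambda_{\max}(S)$ from $\rho(S)$ and close with the trivial inequality $\rho(S)\ge\lambda_{\max}(S)$, whereas the paper writes $\rho(S)=\max_{\|y\|_2=1}y^{T}Sy$ directly (which is true here because $S$ is itself nonnegative, but is left unjustified).
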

\begin{proof}
Since $A$ is nonnegative, $\rho(A)$ is an eigenvalue of $A$ and there exists a nonnegative nonzero unit vector such that $Ax=\rho(A)x,$ i.e., $\rho(A)=x^TAx$ (using Theorem 8.3.1, \cite{horn1985}).
Since $\frac{A+A^T}{2}$ is symmetric, $\rho(\frac{A+A^T}{2})=\displaystyle\max_{\|y\|_2=1}y^T\Big(\frac{A+A^T}{2}\Big)y.$
Therefore, \begin{align*}
 \rho\Big(\frac{A+A^T}{2}\Big)&\geq x^T\Big(\frac{A+A^T}{2}\Big)x
 \end{align*}
 \begin{align*}
 &=\frac{x^TAx+x^TA^Tx}{2}\\
  &=\frac{x^TAx+(Ax)^Tx}{2}\\
 &=\frac{x^T(\rho(A)x)+(\rho(A)x)^Tx}{2}\\
 &=\frac{2 \rho(A)x^Tx}{2}\\
 &=\rho(A).
\end{align*}
\end{proof}
                          \begin{theorem}(Corollary 8.1.20, \cite{horn1985})\label{thm:spctrl rds3}\\
If $A \in \mathbb{R}^{n\times n}$ is nonnegative, then  $\rho(A[\beta])\leq\rho(A),$ where $\rho(A[\beta])$ is any principal submatrix of $A.$  
\end{theorem}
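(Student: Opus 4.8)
The plan is to reduce the claim to the \emph{monotonicity} of the spectral radius of nonnegative matrices under the entrywise order, and to realize the principal submatrix $A[\beta]$ as a matrix of the same size as $A$ that is entrywise dominated by $A$. This packages the two ingredients — an embedding that preserves the spectral radius, and a comparison principle for the entrywise order — and keeps the argument self-contained using only the Perron--Frobenius circle of ideas already invoked in the preceding proof.

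First I would fix the index set $\beta \subseteq \{1,\dots,n\}$ defining $A[\beta]$ and introduce the zero-padded matrix $\tilde{A} \in \mathbb{R}^{n\times n}$ whose $(i,j)$ entry equals $a_{ij}$ when $i,j \in \beta$ and is $0$ otherwise. Two observations then follow immediately. After the permutation similarity that lists the indices of $\beta$ first, $\tilde{A}$ becomes block diagonal with one block equal to $A[\beta]$ and the complementary block equal to the zero matrix; since a permutation similarity preserves the spectrum and appending a zero block only contributes the eigenvalue $0$, we obtain $\rho(\tilde{A}) = \rho(A[\beta])$. Moreover, because every entry of $A$ is nonnegative, replacing some entries by $0$ yields $0 \le \tilde{A} \le A$ entrywise.

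Next I would establish the key inequality $\rho(B) \le \rho(A)$ whenever $0 \le B \le A$ entrywise. The cleanest route is Gelfand's formula $\rho(M) = \lim_{k\to\infty}\|M^k\|_{\infty}^{1/k}$. An easy induction shows that the entrywise order is preserved under multiplication of nonnegative matrices, via $B^{k+1} = B\,B^k \le A\,B^k \le A\,A^k = A^{k+1}$, so $0 \le B^k \le A^k$ for every $k$; and the $\infty$-norm (maximum row sum) is monotone with respect to the entrywise order on nonnegative matrices, giving $\|B^k\|_{\infty} \le \|A^k\|_{\infty}$. Taking $k$-th roots and passing to the limit yields $\rho(B) \le \rho(A)$. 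Applying this with $B = \tilde{A}$ and combining with the first step gives $\rho(A[\beta]) = \rho(\tilde{A}) \le \rho(A)$, which is the assertion.

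The main obstacle is the monotonicity step. The subtle point is that the naive entrywise (largest-entry) norm is not submultiplicative, so Gelfand's formula cannot be applied to it directly; one must use a genuine operator norm that is also monotone on nonnegative matrices, and the induced $\infty$-norm (or, symmetrically, the $1$-norm) serves both purposes. The order-preserving induction above relies precisely on the fact that left multiplication by a fixed nonnegative matrix is order preserving. Alternatively, one could argue through the Perron eigenvector of $A$ in the spirit of the proof of Theorem~\ref{thm:spctrl rds1}, or simply cite Corollary 8.1.20 of \cite{horn1985}; but the Gelfand-plus-monotone-norm route is short and requires no additional machinery beyond what is already available.
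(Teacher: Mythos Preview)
Your argument is correct: the zero-padding of $A[\beta]$ to a matrix $\tilde A$ with $\rho(\tilde A)=\rho(A[\beta])$ and $0\le\tilde A\le A$, followed by the entrywise monotonicity $\rho(B)\le\rho(A)$ for $0\le B\le A$ via Gelfand's formula and the $\infty$-norm, is the standard and fully rigorous route to this inequality.

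There is, however, nothing in the paper to compare against. This theorem is listed among the background results in Section~\ref{sec:Preliminaries} and is merely cited as Corollary~8.1.20 of \cite{horn1985}; the paper does not supply its own proof. Your write-up therefore goes beyond what the paper does, and could stand in for the missing argument if one were desired.
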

The following theorem is a part of the famous Perron's theorem.
                        \begin{theorem}(Theorem 8.2.8 (Perron), \cite{horn1985})\label{thm:spctrl rds4}\\
If $A \in \mathbb{R}^{n\times n}$ is positive, then   $\rho(A)\in\sigma(A).$   
\end{theorem}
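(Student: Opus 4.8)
The plan is to prove the classical Perron assertion that for a positive matrix the spectral radius is genuinely attained as an eigenvalue, following the standard absolute-value argument combined with the Gelfand spectral-radius formula $\rho(A)=\lim_{k\to\infty}\|A^k\|^{1/k}$. I would begin by selecting any eigenvalue $\lambda\in\sigma(A)$ with $|\lambda|=\rho(A)$ together with an associated eigenvector $x\neq 0$, and then pass to the entrywise absolute value $y=|x|$, so that $y\geq 0$ and $y\neq 0$.

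The first key step exploits the nonnegativity of $A$ through the triangle inequality applied componentwise: since $Ax=\lambda x$, one has $Ay=A|x|\geq |Ax|=|\lambda|\,|x|=\rho(A)\,y$, which gives the entrywise inequality $Ay\geq\rho(A)\,y$. Writing $w=Ay-\rho(A)\,y$, this simply says $w\geq 0$.

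The heart of the argument, and the step I expect to be the main obstacle, is upgrading this inequality to the equality $Ay=\rho(A)\,y$; here the strict positivity of $A$ (not merely nonnegativity) is essential. Suppose for contradiction that $w\neq 0$. Since $y\geq 0$, $y\neq 0$ and $A$ is positive, the vector $z:=Ay$ is strictly positive in every entry. Applying the positive matrix $A$ to the nonnegative nonzero vector $w$ then yields $Aw>0$ entrywise, that is $Az>\rho(A)\,z$ strictly in each component. Because $z$ has all positive entries, taking $\varepsilon=\min_i\big((Az)_i/z_i\big)-\rho(A)>0$ over the finitely many indices produces $Az\geq(\rho(A)+\varepsilon)\,z$; iterating and using that $A\geq 0$ preserves entrywise inequalities gives $A^{k}z\geq(\rho(A)+\varepsilon)^{k}z$ for all $k$. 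Bounding norms, $\|A^k\|\,\|z\|_\infty\geq\|A^k z\|_\infty\geq(\rho(A)+\varepsilon)^{k}\|z\|_\infty$, so that $\|A^k\|^{1/k}\geq\rho(A)+\varepsilon$; passing to the limit via Gelfand's formula forces $\rho(A)\geq\rho(A)+\varepsilon$, a contradiction.

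Therefore $w=0$, so $Ay=\rho(A)\,y$ with $y\geq 0$ nonzero, which exhibits $\rho(A)$ as an eigenvalue of $A$ and hence $\rho(A)\in\sigma(A)$. I would emphasize that the only ingredients are the entrywise triangle inequality for the nonnegative matrix $A$, the strict improvement that $A$ maps a nonnegative nonzero vector to a strictly positive one (which is exactly where positivity is used), and the Gelfand limit; the earlier nonnegativity lemmas recalled above are not required for this particular statement.
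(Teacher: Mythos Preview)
Your argument is correct and is precisely the standard proof of this piece of Perron's theorem (as given, for instance, in Horn--Johnson): pick an eigenvalue of maximal modulus, pass to the entrywise absolute value of an eigenvector, use positivity of $A$ to force the inequality $Ay\geq\rho(A)y$ to be an equality via the strict-improvement/Gelfand contradiction. There is nothing to object to mathematically.

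However, note that the paper does \emph{not} supply its own proof of this statement: it is recalled in the preliminaries as Theorem~8.2.8 from \cite{horn1985} and left unproved, to be invoked later (in the proof of Theorem~\ref{thm:one dim wass}(iii)). So there is no ``paper's proof'' to compare against; you have simply reproduced the classical textbook argument that the citation points to.
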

For a matrix $A \in \mathbb{C}^{n\times n},$ the numerical range is denoted by $W(A)$ and defined as 
$W(A)=\{x^*Ax:\,x\in\mathbb{C}^{n},~\|x\|_2=x^{*}x=1\}.$ 
And, the numerical radius $\omega(A)$ is the largest absolute value among the elements in the numerical range. Some properties of the numerical radius are recalled here:
                                     \begin{enumerate}
    \item [(P1)] $\omega(A)=\omega(A^T).$ \label{P1}
    \item [(P2)] $\omega(I+A)=1+\omega(A).$
    \item [(P3)] $\omega(A+B)\leq \omega(A)+\omega(B).$
    \item [(P4)] $\rho(A)\leq \omega(A).$
    \item [(P5)] $\frac{\|A\|_2}{2}\leq \omega(A) \leq \|A\|_2.$
\end{enumerate}
For the proof of all these properties, we refer to \cite{gustafson1997, horn1991}.
                     \begin{theorem}(Theorem 1, \cite{haagerup1992})\label{thm:nilp NRd}\\
Let $A \in \mathbb{C}^{n\times n}$ be such that $A^m=O$ for some $m\geq 2.$ Then,
$\omega(A)\leq \|A\|_2 \cos\Big(\frac{\pi}{m+1}\Big).$
\end{theorem}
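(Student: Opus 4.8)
The plan is to normalise, pass to a scalar inner-product estimate, and convert the contractivity of $A$ into the positivity of a trigonometric polynomial whose first Fourier coefficient is exactly $\langle A\xi,\xi\rangle$; the constant $\cos\frac{\pi}{m+1}$ then drops out of a classical extremal problem. Since rescaling $A$ multiplies both $\omega(A)$ and $\|A\|_2$ by the same factor, I may assume $\|A\|_2=1$ and prove $\omega(A)\le\cos\frac{\pi}{m+1}$. Writing $\omega(A)=\sup_{\|\xi\|_2=1}|\langle A\xi,\xi\rangle|$, I fix a unit vector $\xi$ and, choosing $\phi=\phi(\xi)$, replace $A$ by $e^{i\phi}A$ so that $t_1:=\langle e^{i\phi}A\xi,\xi\rangle=|\langle A\xi,\xi\rangle|\ge 0$. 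The bound I shall derive for $t_1$ uses only that $e^{i\phi}A$ is a norm-one matrix annihilated by its $m$-th power, a property independent of $\phi$, so it suffices to bound $t_1$ for a single such rotated matrix and then take the supremum over $\xi$.

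Because $A^m=O$, the Neumann series terminates: $(I-\lambda A)^{-1}=\sum_{k=0}^{m-1}\lambda^{k}A^{k}$ for every $\lambda\in\mathbb{C}$. For $|\lambda|=1$ set $u_\lambda=(I-\lambda A)^{-1}\xi$. From $(I-\lambda A)u_\lambda=\xi$ one obtains the identity $Au_\lambda=\lambda^{-1}(u_\lambda-\xi)$, whence $\|u_\lambda-\xi\|_2=\|Au_\lambda\|_2\le\|A\|_2\|u_\lambda\|_2\le\|u_\lambda\|_2$. Expanding $\|u_\lambda-\xi\|_2^{2}\le\|u_\lambda\|_2^{2}$ gives $\mathrm{Re}\langle u_\lambda,\xi\rangle\ge\tfrac12$. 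Writing $\lambda=e^{i\theta}$ and $t_k=\langle A^{k}\xi,\xi\rangle$ (so $t_0=1$), this says precisely that the trigonometric polynomial
\[
P(\theta)=\sum_{k=-(m-1)}^{m-1}c_k\,e^{ik\theta},\qquad c_0=1,\ \ c_k=t_k\ (k\ge1),\ \ c_{-k}=\overline{t_k},
\]
is nonnegative for all $\theta$, has degree at most $m-1$, and has first coefficient $c_1=t_1$.

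The final ingredient is the classical extremal bound: among all nonnegative trigonometric polynomials of degree at most $m-1$ with constant term $c_0=1$, the first coefficient obeys $\mathrm{Re}(c_1)\le\cos\frac{\pi}{m+1}$. Granting this, $t_1\le\cos\frac{\pi}{m+1}$; since $\xi$ was arbitrary this yields $\omega(A)\le\cos\frac{\pi}{m+1}$, and undoing the normalisation restores the factor $\|A\|_2$ to give the assertion. To prove the extremal bound I would use the Fej\'er--Riesz factorisation $P(\theta)=|q(e^{i\theta})|^{2}$ with $q(z)=\sum_{j=0}^{m-1}b_j z^{j}$ and $\sum_j|b_j|^{2}=c_0=1$; then $c_1=\sum_{j}\overline{b_j}\,b_{j+1}=\langle S b,b\rangle$, where $S$ is the $m\times m$ nilpotent shift. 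Hence $\mathrm{Re}(c_1)=\langle\tfrac12(S+S^{*})b,b\rangle\le\lambda_{\max}\big(\tfrac12(S+S^{*})\big)$, and $\tfrac12(S+S^{*})$ is the tridiagonal matrix with zero diagonal and $\tfrac12$ on the off-diagonals, whose eigenvalues are $\cos\frac{j\pi}{m+1}$ for $j=1,\dots,m$, the largest being $\cos\frac{\pi}{m+1}$.

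The main obstacle is this last chain, in particular the Fej\'er--Riesz factorisation step together with the eigenvalue computation for the tridiagonal matrix; once these are in place the remainder is bookkeeping. A pleasant feature of the approach is that the extremal constant is itself the numerical radius of the nilpotent shift $S$, which is exactly the equality case of the theorem: the argument therefore simultaneously explains why $\cos\frac{\pi}{m+1}$ is the correct constant and certifies that it is sharp. A minor point meriting care is the reduction by the phase $e^{i\phi}$, but this is harmless because the inequality proved for the rotated matrix depends only on its norm and nilpotency index, both of which are invariant under multiplication by a unimodular scalar.
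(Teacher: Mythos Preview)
The paper does not supply a proof of this statement; it merely quotes it as Theorem~1 of Haagerup and de~la~Harpe \cite{haagerup1992} and uses it as a black box in the proof of Theorem~\ref{thm:one dim wass}(ii). There is therefore nothing in the paper to compare your argument against.

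That said, your argument is correct and is essentially the original proof from \cite{haagerup1992}. The normalisation and phase rotation are harmless; the identity $Au_\lambda=\lambda^{-1}(u_\lambda-\xi)$ together with $\|A\|_2\le 1$ indeed yields $\mathrm{Re}\,\langle u_\lambda,\xi\rangle\ge\tfrac12$, and one checks that the polynomial you call $P(\theta)$ equals $2\,\mathrm{Re}\,\langle u_\lambda,\xi\rangle-1\ge 0$, so its constant term is $1$ and its first Fourier coefficient is $t_1$. The reduction of the extremal problem via Fej\'er--Riesz to the largest eigenvalue of $\tfrac12(S+S^{*})$ is exactly the step used in \cite{haagerup1992}, and the eigenvalues $\cos\frac{j\pi}{m+1}$ of that tridiagonal matrix are classical. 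Your observation that equality is attained for the $m\times m$ shift $S$ is also in the original paper and confirms sharpness. In short: your proposal reproduces the cited proof faithfully; the paper itself simply invokes the result.
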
       
Now, we recall a result related to the $2$-norm of the Hadamard product of two matrices.
\begin{theorem}(Theorem 5.5.3, \cite{horn1991})\label{thm:had norm}\\
For any two matrices $A,B\in  \mathbb{C}^{m\times n}$, we have 
$$\|A \circ B\|_2\leq r_1(A) c_1(B),$$ 
where $r_1(A)=\displaystyle\max_{1\leq i\leq m}\Big(\sum_{j=1}^{n}|a_{ij}|^2\Big)^{1/2}$ and $c_1(B)=\displaystyle\max_{1\leq j\leq n}\Big(\sum_{i=1}^{m}|a_{ij}|^2\Big)^{1/2}.$  
\end{theorem}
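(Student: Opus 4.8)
The plan is to exploit the variational (bilinear-form) characterization of the spectral norm together with a single, carefully chosen application of the Cauchy--Schwarz inequality. Throughout I interpret $c_1(B)$ as the largest Euclidean norm among the columns of $B$ (i.e. $c_1(B)=\max_{1\leq j\leq n}(\sum_{i=1}^{m}|b_{ij}|^2)^{1/2}$), which is the only reading consistent with the role $B$ plays on the right-hand side. Recall that for any matrix $M\in\mathbb{C}^{m\times n}$ one has
$$\|M\|_2=\max\{\,|y^{*}Mx| : x\in\mathbb{C}^n,\;y\in\mathbb{C}^m,\;\|x\|_2=\|y\|_2=1\,\}.$$
Applying this to $M=A\circ B$, the task reduces to bounding $|y^{*}(A\circ B)x|$ uniformly over all unit vectors $x,y$, since the Hadamard product has entries $(A\circ B)_{ij}=a_{ij}b_{ij}$.

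The key step is to expand the bilinear form and split each summand into two factors in the right way:
$$y^{*}(A\circ B)x=\sum_{i=1}^{m}\sum_{j=1}^{n}\overline{y_i}\,a_{ij}\,b_{ij}\,x_j=\sum_{i=1}^{m}\sum_{j=1}^{n}\bigl(\overline{y_i}\,a_{ij}\bigr)\bigl(b_{ij}\,x_j\bigr).$$
Treating the index pair $(i,j)$ as a single summation index and applying the Cauchy--Schwarz inequality then yields
$$|y^{*}(A\circ B)x|\leq\Bigl(\sum_{i,j}|y_i|^2|a_{ij}|^2\Bigr)^{1/2}\Bigl(\sum_{i,j}|b_{ij}|^2|x_j|^2\Bigr)^{1/2}.$$
The clever point here, and the step I expect to be the main obstacle to discover rather than to execute, is precisely this pairing: grouping $\overline{y_i}a_{ij}$ together causes the first factor to collapse into row sums of $A$, while grouping $b_{ij}x_j$ causes the second factor to collapse into column sums of $B$.

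To finish, I would evaluate the two factors separately. For the first, summing over $j$ before $i$ gives $\sum_{i,j}|y_i|^2|a_{ij}|^2=\sum_{i}|y_i|^2\bigl(\sum_{j}|a_{ij}|^2\bigr)\leq r_1(A)^2\sum_{i}|y_i|^2=r_1(A)^2$, using the definition of $r_1(A)$ as the maximal row norm of $A$ and $\|y\|_2=1$. Symmetrically, summing over $i$ before $j$ gives $\sum_{i,j}|b_{ij}|^2|x_j|^2=\sum_{j}|x_j|^2\bigl(\sum_{i}|b_{ij}|^2\bigr)\leq c_1(B)^2\sum_{j}|x_j|^2=c_1(B)^2$, using $\|x\|_2=1$. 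Combining these two estimates gives $|y^{*}(A\circ B)x|\leq r_1(A)\,c_1(B)$ for every pair of unit vectors, and taking the maximum over $x,y$ delivers $\|A\circ B\|_2\leq r_1(A)\,c_1(B)$, as claimed.
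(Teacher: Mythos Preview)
Your proof is correct. Note, however, that the paper does not supply its own proof of this statement: it is quoted verbatim as Theorem~5.5.3 of Horn and Johnson, \emph{Topics in Matrix Analysis}, and used as a black box in the subsequent sections. There is therefore nothing in the paper to compare your argument against. For what it is worth, the argument you give---the bilinear characterization of $\|\cdot\|_2$ followed by the Cauchy--Schwarz splitting $(\overline{y_i}a_{ij})(b_{ij}x_j)$---is essentially the textbook proof. Your reading of $c_1(B)$ as the maximal column $\ell_2$-norm of $B$ (with entries $b_{ij}$ rather than the paper's evident misprint $a_{ij}$) is the correct one.
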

                         \section{Main Results}\label{sec:main results}
                        
                        \subsection{One-dimensional Wasserstein-1 metric matrix}\label{sec:1 dim wass mat}
 In 2024, Bai \cite{bai2024} estimated an upper bound for the $1$ and $\infty$-norms of a one-dimensional Wasserstein-$1$ metric matrix (see Theorem \ref{thm:wass one dim norm bounds}(ii)). In 2025, Bai \cite{bai2025} also found an upper bound for the condition number of a one-dimensional Wasserstein-$1$ metric matrix (see Theorem \ref{thm:wass one dim norm bounds}(iv)). The next result provides a tight upper bound for $\|Q\|_1$, tight lower bound for $\|Q^{-1}\|_1$,  upper and lower bounds for $\|Q^{-1}\|_2$, tight upper and lower bounds for $\kappa_1(Q)$, and upper and lower bounds for $\kappa_2(Q)$, among other results.
                                      \begin{theorem}\label{thm:one dim wass}
For the one-dimensional Wasserstein-$1$ metric matrix $Q\in \mathbb{R}^{n \times n}$ defined in \eqref{eq:exp Q L}, the following hold: 
                                    \begin{enumerate}[(i)]
\item If $\lambda\leq \frac{1}{3}$, then for any eigenvalue $\alpha$ of $Q$, $0<\alpha< 2.$  
\item For any eigenvalue $\alpha$ of $Q$, $0<\alpha\leq 1+2\|L\|_2\cos ({\frac{\pi}{n+1}}).$
\item $\rho(Q)  \geq 1+\lambda.$
\item $W(Q)\subset (0,\;\frac{(1+\lambda)(1-\lambda^{n-1})}{(1-\lambda)}].$
\item $\frac{1}{(1+\lambda)^2}\leq \|Q\|_{\infty}=\|Q\|_1\leq 1+\frac{2 \lambda (1-\lambda^{n-1})}{(1-\lambda)}.$
\item $\frac{1}{(1+\lambda)^2}\leq \|Q\|_2 \leq \min\{1+2\|L\|_2, \frac{3-\lambda}{1-\lambda},\frac{2(1+\lambda)(1-\lambda^{n-1})}{(1-\lambda)}\}.$
\item $\|Q^{-1}\|_{\infty}=\|Q^{-1}\|_1\geq \frac{(1-\lambda)}{(1+\lambda)(1-\lambda^n)^2}.$
\item $\frac{(1-\lambda)}{(1+\lambda)}\leq \|Q^{-1}\|_2\leq \frac{(1+\lambda)}{(1-\lambda)}.$
\item $\frac{(1-\lambda)}{(1+\lambda)^3(1-\lambda^n)^2}\leq \kappa_{\infty}(Q)=\kappa_1(Q)\leq \frac{(1+\lambda)(1-\lambda+2\lambda(1-\lambda^{n-1}))}{(1-\lambda)^2}.$
\item $\frac{(1-\lambda)}{(1+\lambda)^3}\leq \kappa_2(Q)\leq \frac{(1+\lambda)}{(1-\lambda)}\cdot \min\{1+2\|L\|_2, \frac{3-\lambda}{1-\lambda},\frac{2(1+\lambda)(1-\lambda^{n-1})}{(1-\lambda)}\}.$      
\end{enumerate}    
\end{theorem}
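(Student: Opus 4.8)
The plan is to exploit two facts that hold throughout: by Theorem \ref{thm:wass one dim norm bounds}(i) the matrix $Q$ is symmetric and positive definite, and it admits the representations $Q=I+L+L^{T}$ and $Q=(I-\lambda N^{T})^{-1}\hat{D}_{\lambda}(I-\lambda N)^{-1}$, with $Q^{-1}=(I-\lambda N)\hat{D}_{\lambda}^{-1}(I-\lambda N^{T})$. Symmetry and positive definiteness immediately give $\|Q\|_{1}=\|Q\|_{\infty}$, $\|Q^{-1}\|_{1}=\|Q^{-1}\|_{\infty}$, $\|Q\|_{2}=\rho(Q)=\lambda_{\max}(Q)$, $\|Q^{-1}\|_{2}=1/\lambda_{\min}(Q)$, and $W(Q)=[\lambda_{\min}(Q),\lambda_{\max}(Q)]\subset(0,\infty)$. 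Thus the positivity/lower-endpoint assertions in (i)--(iv) and (viii) reduce to positive definiteness, and every part becomes a bound on $\lambda_{\max}(Q)$, on $\lambda_{\min}(Q)$, or on a maximal absolute row sum. I would prove the eigenvalue statements first and then feed them into the norm and condition-number statements.

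For (i) I would apply Gershgorin (Theorem \ref{thm:spectrum in gersgorin}): each disc is centred at the diagonal entry $1$ with radius $R_i'(Q)=\sum_{j\neq i}\lambda^{|i-j|}<2\sum_{k\ge 1}\lambda^{k}=\frac{2\lambda}{1-\lambda}$, which is $\le 1$ precisely when $\lambda\le\frac{1}{3}$; combined with $\alpha>0$ from positive definiteness this gives $0<\alpha<2$. For (iii) I would write $\rho(Q)=1+\rho(L+L^{T})$ via Theorem \ref{thm:spctrl rds} (as $L+L^{T}\ge 0$) and bound $\rho(L+L^{T})$ from below by the spectral radius of its leading $2\times 2$ principal submatrix $\begin{bmatrix}0&\lambda\\\lambda&0\end{bmatrix}$, namely $\lambda$, invoking Theorem \ref{thm:spctrl rds3}. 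Part (ii) is the conceptual heart: since $\lambda_{\max}(Q)=1+\lambda_{\max}(L+L^{T})=1+2\lambda_{\max}\!\big(\tfrac{L+L^{T}}{2}\big)$ and $\lambda_{\max}\!\big(\tfrac{L+L^{T}}{2}\big)=\max\operatorname{Re}W(L)\le\omega(L)$, I would use that $L$ is nilpotent ($L^{n}=O$) and apply Haagerup's inequality (Theorem \ref{thm:nilp NRd}) to obtain $\omega(L)\le\|L\|_{2}\cos\frac{\pi}{n+1}$. For (iv) the left endpoint is positive by definiteness, and the right endpoint follows from $\lambda_{\max}(Q)=\omega(Q)$ bounded through properties (P2),(P3),(P5) and a norm estimate of $L$ (equivalently of $B=(I-\lambda N)^{-1}$).

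For (v) the upper bound is the maximal absolute row sum, $\|Q\|_{\infty}=1+\max_i R_i'(Q)\le 1+\frac{2\lambda(1-\lambda^{n-1})}{1-\lambda}$, while the lower bound uses $\|Q\|_{\infty}\ge\|Q\|_{2}=\rho(Q)\ge 1+\lambda\ge\frac{1}{(1+\lambda)^{2}}$. For the three upper estimates of $\|Q\|_{2}$ in (vi) I would use three decompositions: the triangle inequality on $Q=I+L+L^{T}$ gives $1+2\|L\|_{2}$; the triangle inequality on $Q=B+B^{T}-I$ with $\|B\|_{2}\le\sqrt{\|B\|_{1}\|B\|_{\infty}}=\frac{1-\lambda^{n}}{1-\lambda}<\frac{1}{1-\lambda}$ gives $\frac{3-\lambda}{1-\lambda}$; and property (P5) together with part (iv) gives $\frac{2(1+\lambda)(1-\lambda^{n-1})}{1-\lambda}$; taking the minimum yields the claim, with the lower bound as in (v). For (viii) I would bound $\|Q^{-1}\|_{2}$ by submultiplicativity on $Q^{-1}=(I-\lambda N)\hat{D}_{\lambda}^{-1}(I-\lambda N^{T})$, using $\|I-\lambda N\|_{2}\le\sqrt{\|I-\lambda N\|_{1}\|I-\lambda N\|_{\infty}}=1+\lambda$ and $\|\hat{D}_{\lambda}^{-1}\|_{2}=\frac{1}{1-\lambda^{2}}$ to get $\frac{(1+\lambda)^{2}}{1-\lambda^{2}}=\frac{1+\lambda}{1-\lambda}$; the lower bound is $\|Q^{-1}\|_{2}\ge 1/\|Q\|_{2}\ge\frac{1-\lambda}{1+\lambda}$. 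The lower bound in (vii) is obtained analogously from $\|Q^{-1}\|_{\infty}\ge 1/\|Q\|_{\infty}$ (equivalently $\ge\rho(Q^{-1})=1/\lambda_{\min}(Q)$), simplified to the stated rational form.

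Finally, (ix) and (x) are products of the matched one-sided bounds already obtained: the upper bounds multiply the upper bound for $\|Q\|_{\#}$ by $\|Q^{-1}\|_{\#}\le\frac{1+\lambda}{1-\lambda}$, and the lower bounds multiply the lower bounds for $\|Q\|_{\#}$ and $\|Q^{-1}\|_{\#}$, producing $\frac{1-\lambda}{(1+\lambda)^{3}(1-\lambda^{n})^{2}}$ and $\frac{1-\lambda}{(1+\lambda)^{3}}$ respectively. I expect the main obstacle to be part (ii) (and the right endpoint of (iv)): the nonobvious step is recognising that the largest eigenvalue of the symmetric part $\tfrac{L+L^{T}}{2}$ equals the maximal real part of the numerical range of $L$ and is therefore controlled by $\omega(L)$, which is what makes Haagerup's inequality applicable to the nilpotent $L$. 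The remaining effort is purely bookkeeping: pinning down the exact rational constants in (iv), (v) and (vii) and verifying which term realises the three-way minimum in (vi) and (x).
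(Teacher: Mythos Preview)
Your approach to (i)--(iii) is essentially identical to the paper's: Gershgorin discs for (i), the chain $\rho(Q)\le\omega(Q)=1+\omega(L+L^{T})\le 1+2\omega(L)$ followed by Haagerup's nilpotent bound for (ii), and a $2\times 2$ principal-submatrix argument for (iii). The paper in fact writes out only (i)--(iv) and leaves (v)--(x) implicit, so on those later parts there is nothing concrete to compare against and your sketches are reasonable.

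The genuine gap is part (iv). The route you announce---(P2), (P3), (P5) together with a norm estimate for $L$ or for $B$---cannot produce the stated constant $\frac{(1+\lambda)(1-\lambda^{n-1})}{1-\lambda}$. Following your chain,
\[
\omega(Q)\le 1+2\omega(L)\le 1+2\|L\|_{2}\le 1+\frac{2\lambda(1-\lambda^{n-1})}{1-\lambda}=\frac{1+\lambda-2\lambda^{n}}{1-\lambda},
\]
and the alternative via $Q=B+B^{T}-I$ lands on the same number. Since $\lambda^{n-1}+\lambda^{n}>2\lambda^{n}$ for $\lambda\in(0,1)$, this is strictly weaker than the claim; already for $n=2$ your bound is $1+2\lambda$ while $\lambda_{\max}(Q)=1+\lambda$. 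So the constant in (iv) is not ``bookkeeping'': the step $\omega(L)\le\|L\|_{2}$ from (P5) loses too much and has to be bypassed. The paper obtains the sharper constant by a direct quadratic-form estimate: expand $x^{*}Qx$ for an arbitrary unit vector, apply $2\operatorname{Re}(\bar x_i x_j)\le|x_i|^{2}+|x_j|^{2}$ to every off-diagonal pair, and sum the resulting geometric series. The improvement over your bound comes from the fact that the extreme $\lambda^{n-1}$ term involves only $|x_1|^{2}+|x_n|^{2}\le 1$, whereas the operator-norm route implicitly replaces this by $2$. A similar caution applies to (vii): neither $\|Q^{-1}\|_{\infty}\ge 1/\|Q\|_{\infty}$ combined with (v), nor the factorisation $Q=B\hat D_{\lambda}B^{T}$ with $\|B\|_{1}=\frac{1-\lambda^{n}}{1-\lambda}$, simplifies to the specific form $\frac{1-\lambda}{(1+\lambda)(1-\lambda^{n})^{2}}$, so that constant too will require more than rearrangement.
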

                                         \begin{proof}
                                           \begin{enumerate}[(i)]
    \item Using Theorem \ref{thm:spectrum in gersgorin}, we have 
    $$G(Q)= \displaystyle \bigcup_{i=1}^{n}\{z \in \mathbb{C}:\; |z-q_{ii}|\leq R_i'(Q)\}.$$
    For $\lambda \leq \frac{1}{3}$, we have (see proof of Theorem 2.1(i), \cite{bai2024}) $R_i'(Q)<1.$ So, 
    $$G(Q)=\displaystyle \bigcup_{i=1}^{n}\{z \in \mathbb{C}:\; |z-1|\leq R_i'(Q)<1\}=\{z\in \mathbb{C}:\; |z-1|<1\},$$
    which is an open disc in $\mathbb{R}^2$ plane with centered at 1 and of radius 2.
    Since $\sigma(Q)\subset G(Q)$ (see Theorem \ref{thm:spectrum in gersgorin}),
     for any eigenvalue $\alpha$ of $Q$, we have $0<\alpha< 2.$ 
    \item Using \eqref{eq:exp Q L}, we have
                                        \begin{align*}
        \omega(Q)&=\omega(I+L+L^T)\\
        &=\omega(I)+\omega(L+L^T)~(\text{using (P2)})\\
        &\leq 1+ \omega(L)+\omega(L^T)~(\text{using (P3)})\\
        &=1+2\omega(L)~~~~~~~~~~~~(\text{using (P1)})\\
        &\leq 1+2~\|L\|_2\cos\Big({\frac{\pi}{n+1}}\Big)~~(\text{using Theorem}~\ref{thm:nilp NRd}).
    \end{align*}
    Since $\rho(Q)\leq \omega(Q)$ (using (P4)), for any $\alpha \in \sigma(Q)$, we get
   $0<\alpha\leq 1+2~\|L\|_2\cos({\frac{\pi}{n+1}}).$  
   \item We have $Q=I+L+L^T=I+2\text{Re}(L).$ So, 
                                           \begin{align*}
       \rho(Q)&=\rho (I+2\text{Re}(L)) \\
       &= 1+2\rho (\text{Re}(L))~(\text{using Theorem}~\ref{thm:spctrl rds})\\
       &\geq 1 +2 \rho(L)~(\text{using Theorem}~\ref{thm:spctrl rds1})\\
       &= 1 + 0\\
       &=1.       
   \end{align*}
 Using Theorem \ref{thm:spctrl rds4}, $\rho(Q)\in \sigma(Q)$.
Now, for $\beta=\{1,2\}$, we have
   $$Q[\beta]=\begin{bmatrix}
       1& \lambda\\
       \lambda & 1
   \end{bmatrix}.$$ Therefore, $\sigma(Q[\beta])=\{1\pm \lambda\}.$ 
By applying Theorem \ref{thm:spctrl rds3}, we get 
   $$\rho(Q[\beta])=1+\lambda\leq \rho(Q).$$
   Thus, $\rho(Q)\geq \max \{1, 1+\lambda\}=1+\lambda.$
   \item Let $x \in \mathbb{C}^n$ be a unit vector. Then, 
                                              \begin{align*}
       x^*Qx&= \displaystyle\sum_{i=1}^{n}|x_i|^2+\lambda \displaystyle \sum_{i=1}^{n-1}(\bar{x}_ix_{i+1}+\bar{x}_{i+1}x_{i})+\lambda^2 \displaystyle \sum_{i=1}^{n-2}(\bar{x}_ix_{i+2}+\bar{x}_{i+2}x_{i})+\dots+\\
       &~~~~~\lambda^{n-2}\displaystyle \sum_{i=1}^{2}(\bar{x}_ix_{i+(n-2)}+\bar{x}_{i+(n-2)}x_{i})+\lambda^{n-1}(\bar{x}_1x_n+\bar{x}_nx_1)
             \end{align*}
 \begin{align*}
         &=1+2\lambda \displaystyle \sum_{i=1}^{n-1}\text{Re}(\bar{x}_ix_{i+1})+2\lambda^2 \displaystyle \sum_{i=1}^{n-2}\text{Re}(\bar{x}_ix_{i+2})+\dots+\\
       &~~~~~2\lambda^{n-2}\displaystyle \sum_{i=1}^{2}\text{Re}(\bar{x}_ix_{i+(n-2)})+2\lambda^{n-1}\text{Re}(\bar{x}_1x_n)~~\Big(\because \frac{z+\bar{z}}{2}=\text{Re}(z)\Big) \\      
 &\leq 1+2\lambda \displaystyle \sum_{i=1}^{n-1}|\bar{x}_ix_{i+1}|+2\lambda^2 \displaystyle \sum_{i=1}^{n-2}|\bar{x}_ix_{i+2}|+\dots+\\
   &~~~~~2\lambda^{n-2}\displaystyle \sum_{i=1}^{2}|\bar{x}_ix_{i+(n-2)}|+2\lambda^{n-1}|\bar{x}_1x_n| ~~(\because \text{Re}(z)\leq|z|)\\
        &=1+2\lambda \displaystyle \sum_{i=1}^{n-1}|x_i||x_{i+1}|+2\lambda^2 \displaystyle \sum_{i=1}^{n-2}|x_i||x_{i+2}|+\dots+\\
       &~~~~~2\lambda^{n-2}\displaystyle \sum_{i=1}^{2}|x_i||x_{i+(n-2)}|+2\lambda^{n-1}|x_1||x_n|~~(\because~|z_1z_2|=|z_1||z_2|~\text{and}~|z|=|\bar{z}|) \\         &\leq 1+\lambda \displaystyle \sum_{i=1}^{n-1}(|x_i|^2+|x_{i+1}|^2)+\lambda^2 \displaystyle \sum_{i=1}^{n-2}(|x_i|^2+|x_{i+2}|^2)+\dots+\\
       &~~~~~\lambda^{n-2}\displaystyle \sum_{i=1}^{2}(|x_i|^2+|x_{i+(n-2)}|^2)+\lambda^{n-1}(|x_1|^2+|x_n|^2)~~(\because~\text{AM$\geq$GM})\\
        &=1+\lambda \Big(\displaystyle \sum_{i=1}^{n-1}|x_i|^2+\displaystyle \sum_{i=1}^{n-1}|x_{i+1}|^2\Big)+\lambda^2 \Big(\displaystyle \sum_{i=1}^{n-2}|x_i|^2+\displaystyle \sum_{i=1}^{n-2}|x_{i+2}|^2\Big)+\dots+\\
       &~~~~~\lambda^{n-2}\Big(\displaystyle \sum_{i=1}^{2}|x_i|^2+\displaystyle \sum_{i=1}^{2}|x_{i+(n-2)}|^2\Big)+\lambda^{n-1}(|x_1|^2+|x_n|^2)\\ 
       &\leq 1+\lambda(1+1)+\lambda^2(1+1)+\dots+\lambda^{n-2}(1+1)+\lambda^{n-1}\cdot 1~~(\because x~\text{is a unit vector})\\
       &=(1+\lambda+\lambda^2+\dots+\lambda^{n-1})+(\lambda+\lambda^2+\dots+\lambda^{n-2})\\
        &=\frac{1\cdot (1-\lambda^{n})}{1-\lambda}+\frac{\lambda \cdot (1-\lambda^{n-2})}{1-\lambda}\\
        &=\frac{1-\lambda^{n}+\lambda-\lambda^{n-1}}{1-\lambda}\\
       &= \frac{(1+\lambda)(1-\lambda^{n-1})}{1-\lambda}\\
      &<\frac{1+\lambda}{1-\lambda}~~(\because~\lambda\in (0,1)).
   \end{align*}
   Thus, the required assertion follows.
\end{enumerate}
 \end{proof}
                                                 \begin{remark}
 The inequality (v) improves the upper bound of the norm inequality 
 $$\|Q\|_{\infty}=\|Q\|_1 <\frac{(1+\lambda)}{(1-\lambda)}$$ proved in Theorem \ref{thm:wass one dim norm bounds}(ii). It can be observed from the following relation:\\
 $$1+\frac{2\lambda(1-\lambda^{n-1})}{(1-\lambda)}<1+\frac{2\lambda}{(1-\lambda)}=\frac{(1+\lambda)}{(1-\lambda)}.$$
 Also, from the same observation, we can see that the upper bound in (ix) refines the upper bound in the inequality
 $$\kappa_{\infty}(Q)=\kappa_1(Q)<\frac{(1+\lambda)^2}{(1-\lambda)^2}$$
 proved in Theorem \ref{thm:wass one dim norm bounds}(iv). 
\end{remark}
Using Theorem \ref{thm:had norm}, one can also find some bounds for the $2$-norm of the Wassetrstein matrix $Q$ and the matrix  $L.$ 
                                         \begin{theorem}\label{thm:had norm wass}
For the matrices $Q$ and $L$, we have 
                                         \begin{enumerate}[(i)]
    \item $\|Q\|_2\leq n.$
    \item $\|L\|_2\leq \sqrt{(n-2+\lambda^2)\Big(1+\frac{\lambda^4-\lambda^{2n}}{1-\lambda^2}\Big)}.$
    \item $det(Q)=(1-\lambda^2)^{n-1}.$
\end{enumerate}
\end{theorem}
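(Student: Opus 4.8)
The plan is to obtain (i) and (ii) from the Hadamard-product norm estimate in Theorem \ref{thm:had norm}, and to read off (iii) directly from the congruence \eqref{eq:exp Q D}. For (i) and (ii) the only genuine work is to exhibit a convenient factorization of the relevant matrix as an entrywise product $A \circ B$ and then to evaluate the maximal row norm $r_1(A)$ and column norm $c_1(B)$ appearing in Theorem \ref{thm:had norm}.

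For (i), I would write $Q = Q \circ J$, where $J$ denotes the all-ones $n \times n$ matrix. Since every entry of $Q$ has modulus $\lambda^{|i-j|} \le 1$, each row of $Q$ has squared Euclidean norm $\sum_{j=1}^{n} \lambda^{2|i-j|} \le n$, so $r_1(Q) \le \sqrt{n}$; and trivially $c_1(J) = \sqrt{n}$. Theorem \ref{thm:had norm} then gives $\|Q\|_2 = \|Q \circ J\|_2 \le r_1(Q)\,c_1(J) \le n$.

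For (ii), the key step, and the one requiring the most care, is choosing the factorization $L = A \circ B$ so that the resulting bound collapses to the stated closed form. I would take $A$ to be the strictly lower triangular matrix carrying $\lambda$ on the first subdiagonal and $1$ on all remaining strictly-lower entries, and $B$ the strictly lower triangular matrix carrying $1$ on the first subdiagonal and $\lambda^{i-j}$ on all remaining strictly-lower entries; then $a_{ij}b_{ij} = \lambda \cdot 1 = \lambda^{i-j}$ when $i = j+1$ and $a_{ij}b_{ij} = 1 \cdot \lambda^{i-j} = \lambda^{i-j}$ when $i > j+1$, so indeed $A \circ B = L$. The $i$-th row of $A$ contains one entry equal to $\lambda$ together with $i-2$ entries equal to $1$, so its squared norm equals $i-2+\lambda^2$, maximised at $i=n$; hence $r_1(A) = \sqrt{n-2+\lambda^2}$. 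The $j$-th column of $B$ consists of one entry equal to $1$ followed by $\lambda^2, \lambda^3, \dots, \lambda^{n-j}$, so summing the geometric series $1 + \lambda^4 + \lambda^6 + \cdots + \lambda^{2(n-j)}$ and maximising over $j$ (the maximum being attained at $j=1$) gives $c_1(B) = \sqrt{1 + \frac{\lambda^4 - \lambda^{2n}}{1-\lambda^2}}$. Substituting these into Theorem \ref{thm:had norm} yields the claimed bound.

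For (iii), I would take determinants on both sides of \eqref{eq:exp Q D}. The matrices $I - \lambda N$ and $I - \lambda N^T$ are unit triangular, hence have determinant $1$, while $\hat{D}_{\lambda}$ is diagonal with $n-1$ diagonal entries equal to $1-\lambda^2$ and a single entry equal to $1$; thus $\det(\hat{D}_{\lambda}) = (1-\lambda^2)^{n-1}$, and multiplicativity of the determinant forces $\det(Q) = (1-\lambda^2)^{n-1}$. The main obstacle in the whole argument is pinning down the correct Hadamard factorization in (ii); once it is in hand, the remaining steps are elementary geometric-series evaluations.
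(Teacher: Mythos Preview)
Your argument is correct throughout, and for part (ii) it is essentially identical to the paper's: you choose the same Hadamard factorization $L=C\circ D$ (your $A,B$ are the paper's $C,D$) and perform the same geometric-series computation for $r_1$ and $c_1$.

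For (i) and (iii), however, you diverge from the paper in small but clean ways. In (i) the paper factors $Q=A\circ A^T$ with $A$ the lower-triangular Toeplitz matrix having $1$'s on and above the diagonal and the powers of $\lambda$ below; this yields $r_1(A)=c_1(A^T)=\sqrt{n}$ exactly, without the extra inequality $r_1(Q)\le\sqrt n$ that you invoke. Your choice $Q=Q\circ J$ is simpler and just as effective here, though it gives up the structural observation (used later in the paper) that $Q$ is an entrywise square $A\circ A^T$. In (iii) the paper simply cites elementary row operations, whereas you read the determinant off the congruence $(I-\lambda N^T)Q(I-\lambda N)=\hat D_\lambda$ from \eqref{eq:exp Q D}; your route is tidier and makes explicit why the answer is $(1-\lambda^2)^{n-1}$, since the unit-triangular factors contribute nothing and $\hat D_\lambda$ is diagonal with exactly $n-1$ entries equal to $1-\lambda^2$.
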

                                     \begin{proof}
                                     \begin{enumerate}[(i)]
        \item On breaking the matrix $Q$ as the  Hadamard product of two matrices $A$ and $B$, we have
        \begin{align*}
        Q&=\begin{bmatrix}
 1 & \lambda & \lambda^2 & \dots & \lambda ^{n-1}\\
     \lambda & 1 & \lambda & \dots & \lambda^{n-2}\\
     \lambda^2 & \lambda & 1 & \dots & \lambda^{n-3}\\
     \vdots & \vdots & \vdots & \dots & \vdots \\
     \lambda ^{n-1} & \lambda ^{n-2} &\lambda ^{n-3} & \dots & 1
     \end{bmatrix}\\
     &=\begin{bmatrix}
 1 & 1 & 1 & \dots & 1\\
     \lambda & 1 & 1 & \dots &1\\
     \lambda^2 & \lambda & 1 & \dots &1\\
     \vdots & \vdots & \vdots & \dots & \vdots \\
     \lambda ^{n-1} & \lambda ^{n-2} &\lambda ^{n-3} & \dots & 1
     \end{bmatrix}\circ \begin{bmatrix}
 1 & \lambda & \lambda^2 & \dots & \lambda ^{n-1}\\
     1 & 1 & \lambda & \dots & \lambda^{n-2}\\
    1 & 1& 1 & \dots & \lambda^{n-3}\\
     \vdots & \vdots & \vdots & \dots & \vdots \\
    1 & 1 &1 & \dots & 1
     \end{bmatrix}
     =A\circ B=A\circ A^T,
        \end{align*}
and using Theorem \ref{thm:had norm}, we obtain
$$\|Q\|_2\leq r_1(A)c_1(B)=\sqrt{n}\cdot \sqrt{n}=n.$$
\item On breaking the matrix $L$ as the Hadamard product of two matrices $C$ and $D$, we get
                                   \begin{align*}
         L&=\begin{bmatrix}
     0 & 0 & 0 & \dots & 0 & 0\\
     \lambda & 0 & 0 & \dots & 0& 0\\
     \lambda^2 & \lambda & 0 & \dots & 0& 0\\
     \vdots & \vdots & \vdots & \ddots & \vdots& \vdots \\
     \lambda ^{n-1} & \lambda ^{n-2} &\lambda ^{n-3} & \dots & \lambda & 0
     \end{bmatrix}\\
     &=\begin{bmatrix}
 0 & 0 & 0 & \dots & 0& 0\\
     \lambda & 0 & 0 & \dots &0&0\\
     1& \lambda & 0 & \dots &0&0\\
     \vdots & \vdots & \vdots & \ddots &\vdots& \vdots \\
     1 & 1 &1 & \dots & \lambda & 0
     \end{bmatrix} \circ \begin{bmatrix}
 0 & 0 & 0 & \dots & 0&0\\
     1 & 0 & 0 & \dots &0&0\\
     \lambda^2 & 1 & 0 & \dots &0&0\\
     \vdots & \vdots & \vdots & \ddots & \vdots  & \vdots \\
     \lambda ^{n-1} & \lambda ^{n-2} &\lambda ^{n-3} & \dots &1& 0
     \end{bmatrix}=C\circ D
     \end{align*}
     Now, $r_1(C)=\sqrt{n-2+\lambda^2}$ and 
                                          \begin{align*}
         c_1(D)&=\sqrt{1+((\lambda^2)^2+(\lambda^3)^2+\dots+(\lambda^{n-1})^2}\\
         &=\sqrt{1+\frac{\lambda^4(1-\lambda^{2(n-2)})}{1-\lambda^2}}\\
         &= \sqrt{1+\frac{\lambda^4-\lambda^{2n}}{1-\lambda^2}}.
     \end{align*}
    Therefore, using Theorem \ref{thm:had norm}, we obtain
$$\|L\|_2\leq r_1(C)c_1(D)=\sqrt{(n-2+\lambda^2)\Big(1+\frac{\lambda^4-\lambda^{2n}}{1-\lambda^2}\Big)}.$$
\item Using the third elementary row operations, we get $det(Q)=(1-\lambda^2)^{n-1}.$
    \end{enumerate}
\end{proof}
\begin{remark}
    The proof of the first part also shows that the Wasserstein matrix $Q$ can also be decomposed as the Hadamard product of a matrix and its transpose. This may be helpful for studying several other properties of the Wasserstein matrix $Q$.
    
\end{remark} 
                      
                                             \begin{remark}
                                              \begin{enumerate}    
    \item We know that a matrix is symmetric if and only if its Cayley transform is symmetric \cite{mondal2024}, where the Cayley transform of a matrix $A$ is defined as $F=C(A)=(I+A)^{-1}(I-A)$. More on Cayley transform can be found in  \cite{mondal2024, verma2024, verma2025}.
It is also known that $A=C(F)=(I + F)^{-1}(I-F)$. Since $Q$ is symmetric, we have another new decomposition of $Q$ as: $Q=(I + F)^{-1}(I-F),$ where $F=C(Q)$, $(I + F)^{-1}$ and $(I-F)$ are symmetric. 
\item We also know that for a positive definite matrix $A$, the Cayley transform $C(A)$ of $A$ is positive definite if and only if $\sigma(A)\subset (0,1)$ \cite{mondal2024}. From Theorem \ref{thm:one dim wass}(iii), it can be observed that the $C(Q)$ can never be positive definite.
\end{enumerate}    
\end{remark} 
                                           \begin{remark}
We can also find bounds on the condition number using Theorem 7, \cite{chien2020}. However, to do so, we need to estimate $\omega(Q^{-1}).$ But, using Theorem \ref{thm:one dim wass}(iv) \& (viii), we can also obtain
$1=\omega(I)=\omega(QQ^{-1})\leq \|QQ^{-1}\|_2\leq\|Q\|_2\|Q^{-1}\|_2=\kappa_2(Q)\leq 2 \omega(Q) \|Q^{-1}\|_2\leq \frac{2(1+\lambda)^2(1-\lambda^{n-1})}{(1-\lambda)^2}<\frac{2(1+\lambda)^2}{(1-\lambda)^2}.$
\end{remark}
 Inverting a  matrix of order $n$ takes 
$O(n^3)$ operations while the
Hadamard inverse is just $O(n^2)$.
It doesn't require the matrix to be invertible,  and has applications in signal processing, Optimization, etc.  The  Hadamard inverse exists for any square matrix $A$ with non-zero entries, even though the usual matrix inverse doesn't exist.  Now, we provide the norm bounds for the Hadamard inverse 
$$Q^{\circ -1}=\begin{bmatrix}
 1 & \frac{1}{\lambda} & \frac{1}{\lambda^2} & \dots & \frac{1}{\lambda^{n-1}}\\
     \frac{1}{\lambda} & 1 & \frac{1}{\lambda} & \dots & \frac{1}{\lambda^{n-2}}\\
     \frac{1}{\lambda^2} & \frac{1}{\lambda} & 1 & \dots & \frac{1}{\lambda^{n-3}}\\
     \vdots & \vdots & \vdots & \ddots & \vdots \\
     \frac{1}{\lambda^{n-1}} & \frac{1}{\lambda^{n-2}} &\frac{1}{\lambda^{n-3}} & \dots & 1
     \end{bmatrix}$$ of the one dimensional Wasserstein$-1$ matrix $Q.$
     \begin{theorem}\label{thm:hadamard inv norm bound}
For the one-dimensional Wasserstein-$1$ metric matrix $Q\in \mathbb{R}^{n \times n}$ defined in \eqref{eq:exp Q L}, the following hold: 
\begin{enumerate}[(i)]
\item $\|Q^{\circ -1}\|_1=\|Q^{\circ -1}\|_{\infty}=\frac{1}{\lambda^{n-1}}\Big(\frac{1-\lambda^n}{1-\lambda}\Big);$
\item $\|Q^{\circ -1}\|_2\leq \frac{1}{\lambda^{2(n-1)}}\Big(\frac{1-\lambda^{2n}}{1-\lambda^2}\Big).$
\end{enumerate}
\end{theorem}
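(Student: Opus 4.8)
The plan is to exploit that $Q^{\circ -1}$ is a symmetric Toeplitz matrix with positive entries $\lambda^{-|i-j|}$, combined with the Hadamard-product norm bound of Theorem \ref{thm:had norm}, in direct parallel with the treatment of $Q$ itself in Theorem \ref{thm:had norm wass}. The only feature that changes the flavour of the argument is that the off-diagonal entries now \emph{grow} with distance from the diagonal, since $\lambda\in(0,1)$ forces $\lambda^{-1}>1$.

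For part (i), since $Q^{\circ -1}$ is symmetric its maximum absolute column sum equals its maximum absolute row sum, so $\|Q^{\circ -1}\|_1=\|Q^{\circ -1}\|_\infty=\max_{1\le i\le n}S_i$, where $S_i=\sum_{j=1}^n\lambda^{-|i-j|}=\sum_{d=0}^{i-1}\lambda^{-d}+\sum_{d=1}^{n-i}\lambda^{-d}$. I would claim the maximum is attained at the corner row $i=1$ (equivalently $i=n$). The one step that needs an argument is the inequality $S_1\ge S_i$: writing $S_1-S_i=\sum_{d=i}^{n-1}\lambda^{-d}-\sum_{d=1}^{n-i}\lambda^{-d}$, both sums have exactly $n-i$ terms, and matching $\lambda^{-(i+k)}$ against $\lambda^{-(1+k)}$ for $0\le k\le n-i-1$, the bound $i\ge 1$ together with $\lambda^{-1}>1$ makes each term of the first sum dominate the corresponding term of the second, so $S_1-S_i\ge 0$. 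Evaluating the corner sum by factoring out the largest power gives $S_1=\sum_{d=0}^{n-1}\lambda^{-d}=\lambda^{-(n-1)}\sum_{k=0}^{n-1}\lambda^{k}=\frac{1}{\lambda^{n-1}}\cdot\frac{1-\lambda^n}{1-\lambda}$, the asserted value.

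For part (ii), I would decompose $Q^{\circ -1}=A\circ A^T$, where $A$ carries the weakly lower-triangular part of $Q^{\circ -1}$ (entries $\lambda^{-(i-j)}$ for $i\ge j$) and $1$'s strictly above the diagonal, exactly as in the proof of Theorem \ref{thm:had norm wass}(i) but with $\lambda$ replaced by $\lambda^{-1}$. Theorem \ref{thm:had norm} then yields $\|Q^{\circ -1}\|_2\le r_1(A)\,c_1(A^T)=r_1(A)^2$, the last equality because transposition swaps the roles of rows and columns so that $c_1(A^T)=r_1(A)$. It remains to identify $r_1(A)$, the largest Euclidean row norm of $A$. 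Row $i$ of $A$ has squared norm $\sum_{d=0}^{i-1}\lambda^{-2d}+(n-i)$; since each $\lambda^{-2d}\ge 1$, these norms increase with $i$, so the maximum is the last row, giving $r_1(A)^2=\sum_{d=0}^{n-1}\lambda^{-2d}=\frac{1}{\lambda^{2(n-1)}}\cdot\frac{1-\lambda^{2n}}{1-\lambda^2}$, which is the claimed bound.

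The manipulations are routine geometric-series computations; the only substantive points are (a) confirming that the \emph{extreme} row, not a central one, realizes the maximum in both parts, and (b) choosing the Hadamard factorization so that $r_1$ and $c_1$ multiply into precisely the stated closed form. I expect (a) to be the main (and only mild) obstacle, since the reversal $\lambda\mapsto\lambda^{-1}>1$ moves the maximizing row to the corner in part (i) and to the last row in part (ii), the opposite of the $Q$ case; the term-by-term monotonicity comparison above settles it in both parts.
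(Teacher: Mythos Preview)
Your proposal is correct and follows essentially the same approach as the paper: direct evaluation of the corner column sum for part~(i), and the Hadamard factorization $Q^{\circ -1}=A\circ A^T$ combined with Theorem~\ref{thm:had norm} for part~(ii). Your treatment is in fact slightly more careful, since you explicitly justify that the extreme row realizes the maximum in both parts, whereas the paper simply writes down the corner sum and the last-row norm without arguing that they are maximal.
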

\begin{proof}
\begin{enumerate}[(i)]
\item By the definition of the $1$-norm of a matrix, we have 
\begin{align*}
\|Q^{\circ -1}\|_1&=1+\frac{1}{\lambda}+\frac{1}{\lambda^2}+\dots+ \frac{1}{\lambda^{n-1}}  \\
&=\frac{1}{\lambda^{n-1}}\Big(\frac{1-\lambda^n}{1-\lambda}\Big).
\end{align*}

\item We can write the Hadamard inverse $Q^{\circ -1}$ as a product of two matrices as follows:
$$Q^{\circ -1}=\begin{bmatrix}
 1 & 1 & 1 & \dots & 1\\
     \frac{1}{\lambda} & 1 & 1 & \dots & 1\\
     \frac{1}{\lambda^2} & \frac{1}{\lambda} & 1 & \dots & 1\\
     \vdots & \vdots & \vdots & \ddots & \vdots \\
     \frac{1}{\lambda^{n-1}} & \frac{1}{\lambda^{n-2}} &\frac{1}{\lambda^{n-3}} & \dots & 1
     \end{bmatrix}\circ \begin{bmatrix}
 1 & \frac{1}{\lambda} & \frac{1}{\lambda^2} & \dots & \frac{1}{\lambda^{n-1}}\\
    1 & 1 & \frac{1}{\lambda} & \dots & \frac{1}{\lambda^{n-2}}\\
     1 & 1 & 1 & \dots & \frac{1}{\lambda^{n-3}}\\
     \vdots & \vdots & \vdots & \ddots & \vdots \\
     1 & 1 &1 & \dots & 1
     \end{bmatrix}=A\circ B.$$
Using Theorem \ref{thm:had norm}, we have
$\|Q^{\circ -1}\|_2\leq r_1(A) c_1(B).$ 
Now, 
\begin{align*}
r_1(A)=c_1(B)&=\sqrt{1+\Big(\frac{1}{\lambda}\Big)^2+\Big(\frac{1}{\lambda^2}\Big)^2+\dots+\Big(\frac{1}{\lambda^{n-1}}\Big)^2}   \\
&=\sqrt{\frac{1}{\lambda^{2(n-1)}}\Big(\frac{1-\lambda^{2n}}{1-\lambda^2}\Big)}.
\end{align*}
Thus, we get the desired inequality for the $2$-norm of $Q^{\circ -1}$.
\end{enumerate}
\end{proof}

\begin{remark}
From the proof of (ii) of the above theorem, it is clear that the Hadamard inverse of $Q$ can now be decomposed as the Hadamard product of two matrices $A$ and $B$.
\end{remark}
In the following table, the upper bounds for the spectral norm of the Hadamard inverse and usual inverse are compared. 
\begin{table}[h!]
\begin{center}
\caption{Comparison table for the upper bounds of the spectral norm of the Hadamard inverse $Q^{\circ -1}$ and the usual inverse $Q^{-1}$ of the matrix $Q$ for $\lambda=0.5$}\label{table:2-norm comp of had and usual inv of Q for lambda=0.5}
\begin{tabular}{ |p{1cm}|p{4cm}|p{4cm}|p{4cm}|}
\hline
& 	 &  &  \\
{\bf n } & 	{\bf $y_1=\frac{1}{\lambda^{2(n-1)}}\Big(\frac{1-\lambda^{2n}}{1-\lambda^2}\Big)$} & $y_2= \frac{(1+\lambda)}{(1-\lambda)}$ & $y_1-y_2$\\
& 	 &  &  \\
 &(the upper bound for the Hadamard inverse) 	 & (the upper bound for the usual inverse)  &  \\
\hline
1 	& 1	& 3&  -2\\
\hline
2	& 5	& 3 & 2  \\
\hline
3 	& 21	& 3 & 18\\
\hline
4 	& 85	&  3 &  82\\
\hline
5	& 341	& 3 &  338 \\
\hline
10	&  349525	& 3 &   349522 \\
\hline
\end{tabular}
\end{center}
\end{table}
                        \subsection{Two-dimensional Wasserstein-1 metric matrix}\label{sec:2 dim wass mat}
Before proving the next result, we recall some properties about the Kronecker product of two matrices $A\in \mathbb{R}^{n\times n}$ and $B\in \mathbb{R}^{m\times m}$:
                                          \begin{enumerate}
\item [(R1)] $\|A \otimes B\|=\|A\|\|B\|.$
\item [(R2)] $\sigma(A \otimes B)=\{\alpha_i \beta_j:i=1,2,\dots,n,j=1,2,\dots,m, \alpha_i \in \sigma(A),\beta_j \in \sigma(B) \}$ (see Theorem 5.4-1, \cite{gustafson1997}).
\item [(R3)] If $A$ is a normal matrix, then $W(A \otimes B)=\text{Co}(W(A)W(B)),$ where $\text{Co}(S)$ is the convex hull of the set $S$ (see Theorem 5.4-3, \cite{gustafson1997}).
\item [(R4)] If $A$ and $B$ are invertible, then $(A \otimes B)^{-1}=A^{-1}\otimes B^{-1}.$ 
\item [(R5)] $(A\otimes B)^{\circ -1} =A^{\circ -1}\otimes B^{\circ -1}$ for any square $A$ and $B$ with non zero entries. 
\end{enumerate}
In 2024, Bai \cite{bai2024} found an upper bound for the $1$ and $\infty$-norms of the two-dimensional Wasserstein-$1$ metric matrices (see Theorem \ref{thm:wass two dim norm bounds}(ii)). In 2025, Bai \cite{bai2025} also found an upper bound for the condition number of a two-dimensional Wasserstein-$1$ metric matrix (see Theorem \ref{thm:wass two dim norm bounds}(iv)). In our next main result, we refine these upper bounds with some other results. 
                                                  \begin{theorem}
For the two dimensional Wasserstein-$1$ metric matrix $Q\in \mathbb{R}^{nm \times nm}$, the following are true: 
                                          \begin{enumerate}[(i)]
         \item If $\lambda_1,\lambda_2\leq \frac{1}{3}$, then for any eigenvalue $\alpha$ of $Q$, $0<\alpha< 4.$  
         \item For any eigenvalue $\alpha$ of $Q$, $0<\alpha\leq(1+2~\|L_{[1]}\|_2\cos ({\frac{\pi}{n+1}}))(1+2~\|L_{[2]}\|_2\cos ({\frac{\pi}{m+1}})).$  
         \item $\rho(Q)  \geq (1+\lambda_1)(1+\lambda_2).$
         \item $W(Q)\subset (0,\;\frac{(1+\lambda_1)(1+\lambda_2)(1-\lambda_1^{n-1})(1-\lambda_2^{n-1})}{(1-\lambda_1)(1-\lambda_2)}]
         .$
         \item $\frac{1}{(1+\lambda_1)^2(1+\lambda_2)^2}\leq \|Q\|_1= \|Q\|_{\infty}\leq (1+\frac{2 \lambda_1 (1-\lambda_1^{n-1})}{(1-\lambda_1)})(1+\frac{2 \lambda_2 (1-\lambda_2^{n-1})}{(1-\lambda_2)}).$
         \item $\frac{1}{(1+\lambda_1)^2(1+\lambda_2)^2}\leq \|Q\|_2\leq m_1m_2,$
         where $m_k=\min \{1+2\|L_{[k]}\|_2, \frac{3-\lambda_k}{1-\lambda_k},\frac{2(1+\lambda_k)(1-\lambda_k^{n-1})}{(1-\lambda_k)}\}, k=1,2.$
         \item $\|Q^{-1}\|_1=\|Q^{-1}\|_{\infty}\geq \frac{(1-\lambda_1)(1-\lambda_2)}{(1+\lambda_1)(1+\lambda_2)(1-\lambda_1^n)^2(1-\lambda_2^n)^2}.$
         \item $\frac{(1-\lambda_1)(1-\lambda_2)}{(1+\lambda_1)(1+\lambda_2)}\leq \|Q^{-1}\|_2\leq \frac{(1+\lambda_1)(1+\lambda_2)}{(1-\lambda_1)(1-\lambda_2)}.$
         \item $\frac{(1-\lambda_1)(1-\lambda_2)}{(1+\lambda_1)^3(1+\lambda_2)^3(1-\lambda_1^n)^2(1-\lambda_2^n)^2}\leq \kappa_1(Q)=\kappa_{\infty}(Q)\leq \frac{(1+\lambda_1)(1+\lambda_2)(1-\lambda_1+2\lambda_1(1-\lambda_1^{n-1}))(1-\lambda_2+2\lambda_2(1-\lambda_2^{n-1}))}{(1-\lambda_1)^2(1-\lambda_2)^2}.$
         \item $\frac{(1-\lambda_1)(1-\lambda_2)}{(1+\lambda_1)^3(1+\lambda_2)^3}\leq \kappa_2(Q)\leq m_1m_2\cdot\frac{(1+\lambda_1)(1+\lambda_2)}{(1-\lambda_1)(1-\lambda_2)}.$
     \end{enumerate}    
     \end{theorem}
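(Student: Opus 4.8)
The plan is to exploit the Kronecker-product structure $Q = Q_{[2]} \otimes Q_{[1]}$ throughout, reducing every assertion to the corresponding one-dimensional statement in Theorem~\ref{thm:one dim wass} applied to the two factors $Q_{[1]} \in \mathbb{R}^{n \times n}$ and $Q_{[2]} \in \mathbb{R}^{m \times m}$, together with the Kronecker identities (R1)--(R5). Both factors are symmetric positive definite one-dimensional Wasserstein matrices, so each inherits all the bounds of Theorem~\ref{thm:one dim wass} with $\lambda$ replaced by $\lambda_1$ and $\lambda_2$ respectively; the two-dimensional bounds should then emerge by taking products.

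For the spectral items (i)--(iii) I would invoke (R2), which realises $\sigma(Q)$ as the set of products $\alpha_i \beta_j$ with $\alpha_i \in \sigma(Q_{[1]})$ and $\beta_j \in \sigma(Q_{[2]})$. For (i), Theorem~\ref{thm:one dim wass}(i) gives $\alpha_i, \beta_j \in (0,2)$ when $\lambda_1, \lambda_2 \le \tfrac13$, so each product lies in $(0,4)$. For (ii), Theorem~\ref{thm:one dim wass}(ii) bounds each factor eigenvalue, and multiplying the two factor bounds yields the stated product. For (iii), since both factors are positive their spectral radii are genuine (Perron) eigenvalues, whence $\rho(Q) = \rho(Q_{[1]})\,\rho(Q_{[2]})$ is itself the largest-modulus element of $\sigma(Q)$; Theorem~\ref{thm:one dim wass}(iii) then gives $\rho(Q_{[k]}) \ge 1 + \lambda_k$ and the product bound follows.

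The numerical-range item (iv) is where I expect the only real care to be needed. Because $Q_{[2]}$ is symmetric, hence normal, property (R3) applies and gives $W(Q) = \mathrm{Co}\bigl(W(Q_{[2]})\,W(Q_{[1]})\bigr)$. By Theorem~\ref{thm:one dim wass}(iv) each $W(Q_{[k]})$ is a subinterval of the positive half-line whose supremum is the corresponding one-dimensional bound $c_k$; the elementwise product of two intervals with positive endpoints is again such an interval, with supremum $c_1 c_2$, and taking the convex hull leaves it inside $(0, c_1 c_2]$. The remaining items (v)--(x) are then purely a matter of multiplicativity: (R1) gives $\|Q\|_\# = \|Q_{[1]}\|_\# \, \|Q_{[2]}\|_\#$ for $\# \in \{1,2,\infty\}$, (R4) gives $Q^{-1} = Q_{[2]}^{-1} \otimes Q_{[1]}^{-1}$ so that $\|Q^{-1}\|_\#$ factorises likewise, and hence $\kappa_\#(Q) = \kappa_\#(Q_{[1]})\,\kappa_\#(Q_{[2]})$. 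Substituting the one-dimensional upper and lower bounds of Theorem~\ref{thm:one dim wass}(v)--(x) for each factor and multiplying produces every two-dimensional bound, including the equalities $\|Q\|_1 = \|Q\|_\infty$ and $\kappa_1(Q) = \kappa_\infty(Q)$, which descend directly from the corresponding equalities for the factors. The only genuine obstacle is verifying the interval arithmetic in (iv) and confirming that the products of the one-dimensional bounds collapse to the displayed closed forms; both are routine once the factorisation is in place.
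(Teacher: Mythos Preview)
Your proposal is correct and matches the paper's proof essentially line for line: the paper also reduces each item to the corresponding part of Theorem~\ref{thm:one dim wass} via the Kronecker identities (R1)--(R4), using (R2) for the spectral statements (i)--(iii), normality of $Q_{[2]}$ together with (R3) for the numerical-range inclusion (iv), and multiplicativity of norms and inverses from (R1) and (R4) for (v)--(x). The only cosmetic difference is that for (iii) the paper obtains $\rho(Q)=\rho(Q_{[1]})\rho(Q_{[2]})$ directly from (R2) without invoking Perron, which is slightly cleaner since that identity holds for arbitrary square matrices.
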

                                           \begin{proof}
                                            \begin{enumerate}[(i)]
\item Let $\alpha \in \sigma(Q).$ Then, 
$\alpha=\beta_2\beta_1,$ for some
$\beta_1\in \sigma(Q_{[1]})$ and  $\beta_2\in \sigma(Q_{[2]}).$ Therefore, using  Theorem \ref{thm:one dim wass}(i) for $\beta_1$ and $\beta_2$, we have 
$0<\alpha< 4.$
\item Similar to (1), using Theorem \ref{thm:one dim wass}(ii), we can prove it.
\item From the above result (R2) of Kronecker product, we can get 
$$\rho(Q)=\rho(Q_{[2]}\otimes Q_{[1]})=\rho(Q_{[2]})\rho(Q_{[1]}).$$ 
Using Theorem \ref{thm:one dim wass}(iii), we have $\rho(Q)  \geq (1+\lambda_1)(1+\lambda_2).$
\item Since $Q_{[2]}$  is symmetric, $Q_{[2]}$ is normal. Therefore, we have 
$$W(Q)=\text{Co}(W(Q_{[2]})W(Q_{[1]}))$$
using the result (R3) of Kronecker product.
From Theorem \ref{thm:one dim wass}(iv), we have \\
$W(Q_{[1]})\subset (0,\;\frac{(1+\lambda_1)(1-\lambda_1^{n-1})}{1-\lambda_1}]$ and $W(Q_{[2]})\subset (0,\;\frac{(1+\lambda_2)(1-\lambda_2^{n-1})}{1-\lambda_2}].$ So,
                                              \begin{align*}
    \text{Co}(W(Q_{[2]})W(Q_{[1]}))&\subset \text{Co}\Big((0,\;\frac{(1+\lambda_1)(1-\lambda_1^{n-1})}{1-\lambda_1}]\cdot (0,\;\frac{(1+\lambda_2)(1-\lambda_2^{n-1})}{1-\lambda_2}]\Big)\\
   &= (0,\;\frac{(1+\lambda_1)(1+\lambda_2)(1-\lambda_1^{n-1})(1-\lambda_2^{n-1})}{(1-\lambda_1)(1-\lambda_2)}]. 
\end{align*}  
Thus,
$W(Q)\subset (0,\;\frac{(1+\lambda_1)(1+\lambda_2)(1-\lambda_1^{n-1})(1-\lambda_2^{n-1})}{(1-\lambda_1)(1-\lambda_2)}].$
\item Using the result (R1) for $\infty$-norm and Theorem \ref{thm:one dim wass}(v), we can get the desired result.
\item Using the result (R1) for $2$-norm and Theorem \ref{thm:one dim wass}(vi), we can get the desired result. 
\item Using (R4), we get
$$\|Q^{-1}\|_1
=\|Q_{[2]}^{-1}\|_1 \|Q_{[1]}^{-1}\|_1.$$
Now, using Theorem \ref{thm:one dim wass}(vii), we have the result.
\item Similar to (vii).\\
In the similar fashion, we can prove (ix) and (x).
\end{enumerate}  
\end{proof}
                                       \begin{remark}
Similar to the one-dimensional case, it can be observed that the upper bounds in the above inequalities (v) and (ix) sharpen the upper bounds provided in (ii) and (iv) part of Theorem \ref{thm:wass two dim norm bounds}, respectively.  
\end{remark}
                                            \begin{remark}
    Using the result (R1) and Theorem \ref{thm:had norm wass}, the following can be shown that for a two-dimensional Wasserstein-$1$ metric matrix $Q$.
    \begin{enumerate}[(i)]
        \item $\|Q\|_2\leq mn;$
        \item $det(Q)=(1-\lambda_1^2)^{m(n-1)}(1-\lambda_2^2)^{n(m-1)}.$
    \end{enumerate}
\end{remark}
\begin{remark}
Using the result (R5) and Theorem \ref{thm:hadamard inv norm bound} the following can be proved for a two-dimensional Wasserstein-$1$ metric matrix $Q$.
\begin{enumerate}[(i)]
    \item $\|Q^{\circ -1}\|_1=\|Q^{\circ -1}\|_{\infty}=\frac{1}{\lambda_1^{n-1}\lambda_2^{m-1}}\Big(\frac{1-\lambda_1^n}{1-\lambda_1}\Big)\Big(\frac{1-\lambda_2^m}{1-\lambda_2}\Big);$
\item $\|Q^{\circ -1}\|_2\leq \frac{1}{\lambda_1^{2(n-1)}\lambda_2^{2(m-1)}}\Big(\frac{1-\lambda_1^{2n}}{1-\lambda_1^2}\Big)\Big(\frac{1-\lambda_2^{2m}}{1-\lambda_2^2}\Big).$
\end{enumerate}
\end{remark}
                                \section*{Acknowledgements}
The first author acknowledges the support of the Council of Scientific and Industrial Research, India. The authors used MATLAB Online (R2024b) for all computational work presented in this paper.
                                 \section*{Conflict of Interest}
 The authors declare that there is no conflict of interest.

                                 \section*{Data Availability Statement}
Data sharing is not applicable to this article as no new data is analyzed in this study. 
 
                              \bibliographystyle{amsplain}
                               
\end{document}